\documentclass[a4paper,11pt]{amsart}

       \usepackage{palatino, verbatim}
        \usepackage[latin1]{inputenc}
        \usepackage[T1]{fontenc}
        \usepackage{amsthm, amsfonts}
        \usepackage{amsfonts}
        \usepackage{graphicx}
        \usepackage{amssymb}
        \usepackage{amsmath}
        \usepackage{latexsym}
        \usepackage[all]{xy}
        \usepackage{color}

        \newtheorem{thm}{Theorem}[section]
          
          \newtheorem{lem}[thm]{Lemma}
          \newtheorem{prop}[thm]{Proposition}
          \newtheorem{conj}[thm]{Conjecture}

        \theoremstyle{definition}
          
          \newtheorem{rem}{Remark}

          \newcommand\M{{M}}

           \newcommand\G{{\mathcal G}}
          \newcommand\im{\mathrm{Im}}
          \newcommand\Pic{\mathrm{Pic}}
          
          \newcommand\oo{\mathcal O}
          \newcommand\Z{\mathbb{Z}}
          \newcommand\Ext{\mathrm{Ext}}
          
          \newcommand\Hom{\mathrm{Hom}}
          
          \newcommand\rk{\mathrm{rk}}

\topmargin = 0 pt 
\textheight = 645 pt 

\oddsidemargin = 24 pt 
\evensidemargin = 24 pt 
\textwidth      = 412 pt  

\title
{A codimension 2 component of the Gieseker-Petri locus}
\author{Margherita Lelli--Chiesa}
\address{Università degli studi Roma Tre, Dipartimento di Matematica e Fisica, Largo San Leonardo Murialdo 1, 00146 Roma}
\email{margherita.lellichiesa@uniroma3.it}

\begin{document}
\begin{abstract}
We show that the Brill-Noether locus $M^3_{18,16}$ is an irreducible component of the Gieseker-Petri locus in genus $18$ having codimension $2$ in the moduli space of curves. This result disproves a conjecture predicting that the Gieseker-Petri locus is always divisorial.
\end{abstract}
\maketitle
\section{Introduction}\label{stazione}
The Gieseker-Petri locus $GP_g$ inside the moduli space of smooth irreducible genus $g$ curves $M_g$ parametrizes all those curves $C$ that possess a line bundle $A$ for which the Petri map
$
\mu_{0,A}:H^0(C,A)\otimes H^0(C,\omega_C\otimes A^\vee)\to H^0(C,\omega_C)
$
is non-injective. By the Gieseker-Petri Theorem, $GP_g$ is a proper subvariety of $M_g$ and, by Clifford's Theorem and  the Riemann-Roch Theorem, it breaks up  as follows:
\begin{equation*}GP_g=\bigcup_{0<2r\leq d\leq g-1}GP^r_{g,d},\end{equation*}
where $GP^r_{g,d}$ is its closed subset defined as
$$GP^r_{g,d}:=\{[C]\in\M_g\,\vert\, \exists\,(A,V)\in G^r_d(C)\textrm{ with } \ker\mu_{0,V}\neq 0\};$$
 here, $\mu_{0,V}$ denotes the restriction of $\mu_{0,A}$ to $V\otimes H^0(C,\omega_C\otimes A^\vee)$. Plenty of work has been devoted to the study of the codimension of the Gieseker-Petri locus and this was partially motivated by the following controversial conjecture (cf. \cite{CHF} for a very nice survey of the debate):
\begin{conj}\label{sernesi}
The Gieseker-Petri locus $GP_g$ has pure codimension $1$ in $M_g$.
\end{conj}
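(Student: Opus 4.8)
The statement is Conjecture~\ref{sernesi}, which the abstract announces to be \emph{false}; so rather than prove it I would set out to refute it, by showing that $M^3_{18,16}$ is an irreducible component of $GP_{18}$ of codimension $2$ in $M_{18}$ — a non-divisorial component. The numerics set the stage: for $g=18$, $r=3$, $d=16$ one has $\rho=g-(r+1)(g-d+r)=18-4\cdot5=-2$, so that $\dim M_{18}=51$ and the expected dimension $3g-3+\rho=49$ predicts codimension $2$. The inclusion $M^3_{18,16}\subseteq GP_{18}$ is immediate: for $(A,V)\in G^3_{16}(C)$ we have $h^0(A)=4$ and, by Riemann--Roch and Serre duality, $h^0(\omega_C\otimes A^\vee)=h^1(A)=5$, so the source of $\mu_{0,V}$ has dimension $4\cdot5=20>18=h^0(\omega_C)$ and the Petri map cannot be injective.

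Given this inclusion, and once one knows that $M^3_{18,16}$ is irreducible of dimension exactly $49$, it suffices to produce a single curve $[C_0]\in M^3_{18,16}$ at which the Zariski tangent space (or the tangent cone) of $GP_{18}$ has dimension at most $49$: then $GP_{18}$ has dimension $49$ near $[C_0]$, so the irreducible, closed, $49$-dimensional $M^3_{18,16}$ must itself be a component. Establishing that $M^3_{18,16}$ is irreducible of the expected dimension $49$ — the bound $\le49$ being the hard, non-automatic direction when $\rho<0$ — is the first serious input, which I would obtain by a degeneration argument controlling the $g^3_{16}$'s.

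For the tangent space bound I would choose $C_0$ to lie on a polarized K3 surface $S$, where Lazarsfeld--Mukai bundles furnish the decisive tool. The plan is: (i) classify \emph{all} linear series $(B,W)$ on $C_0$ with $\ker\mu_{0,W}\neq0$; (ii) to each such series associate a rank-$2$ vector bundle, realized through the Lazarsfeld--Mukai construction on $S$, and use its stability and Brill--Noether behaviour — which are rigid and computable on a K3 — to show that the $g^3_{16}$, together with its Serre-dual series $\omega_{C_0}\otimes A^\vee$, is the only Petri-special series; (iii) conclude that every first-order deformation of $C_0$ preserving some non-injective Petri map already preserves the $g^3_{16}$, so that the tangent cone of $GP_{18}$ at $[C_0]$ coincides with that of $M^3_{18,16}$ and $GP_{18}$ has dimension $49$ near $[C_0]$.

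The main obstacle is the classification in steps (i)--(ii): ruling out \emph{every} other source of Petri-failure. Because $C_0$ is itself a special curve — it carries an unexpected $g^3_{16}$ — one must make sure this extra geometry does not secretly force some $\rho=0$ pencil or net to become Petri-special, which would place $C_0$ on a classical Gieseker--Petri divisor and collapse the whole argument. Controlling this through the stability of the associated Lazarsfeld--Mukai bundles, and verifying that the relevant rank-$2$ Brill--Noether loci behave as expected on $S$, is where the essential difficulty lies.
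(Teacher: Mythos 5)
You have read the situation correctly: the conjecture is false, and your refutation strategy --- exhibit $M^3_{18,16}$ (here $\rho=-2$, so codimension $2$) as a component of $GP_{18}$ by certifying a single curve on a $K3$ surface via Lazarsfeld--Mukai bundles --- is, in outline, the paper's own. However, two of your steps contain genuine gaps.

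The first is in your step (ii). The Lazarsfeld--Mukai bundle of a $g^r_d$ has rank $r+1$, not $2$: excluding Petri-special pencils, nets and $g^3_{17}$'s requires stability analyses in ranks $2$, $3$ and $4$, which is laborious but of the kind you envision. What your toolkit genuinely does not cover are the \emph{non-complete} series on $C_0$, above all the nets $(\oo_{C_0}(H),V)$ with $V\subset H^0(\oo_{C_0}(H))$ of dimension $3$, i.e.\ non-complete $g^2_{16}$'s. These are not a pedantic afterthought: by Eisenbud's theorem on linear sections of determinantal varieties, whenever $\rho(g,r,d)<d-g<0$ a complete $g^r_d$ \emph{automatically} forces a Petri-special non-complete $g^{r-1}_d$ (Proposition \ref{noncomp}), and this mechanism is exactly what swallows every candidate codimension-$2$ Brill--Noether locus in genera $14$--$17$ (e.g.\ $\M^3_{14,13}\subset GP^2_{14,13}$). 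Genus $18$, $d=16$ is the borderline case $\rho=d-g$, where the forcing fails only by equality, so one must prove by hand that no $(\oo_{C_0}(H),V)$ is Petri-special. There your mechanism ``stability $\Rightarrow$ simplicity $\Rightarrow$ Petri injective'' breaks down, because for a non-complete series $h^1(E_{C,(A,V)})\neq 0$ and simplicity of the bundle no longer computes $\ker\mu_{0,V}$; the paper needs a separate, delicate argument (universal-extension diagrams plus a Quot-scheme parameter count, Proposition \ref{noncomplete}), and its introduction singles this out as the hardest step. Your proposal, whose risk assessment mentions only $\rho=0$ pencils and nets, would sail past it.

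The second gap is the reduction via tangent cones. The phrase ``every first-order deformation of $C_0$ preserving some non-injective Petri map already preserves the $g^3_{16}$'' is not well defined until you control limits of Petri-special series along families, whose type $(r,d)$ can jump. The rigorous substitute is purely set-theoretic and makes infinitesimal arguments unnecessary: by Clifford and Riemann--Roch, $GP_{18}$ is a \emph{finite} union of the \emph{closed} loci $GP^r_{18,d}$, and Propositions \ref{prop:gp} and \ref{noncomp} reduce this union to $\M^3_{18,16}$ together with $GP^3_{18,17}$, $GP^2_{18,d}$ for $14\le d\le 17$ and $GP^1_{18,k}$ for $10\le k\le 17$. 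Once $[C_0]$ lies in none of the latter closed sets --- equivalently, once all the varieties $G^3_{17}(C_0)$, $G^2_d(C_0)$, $G^1_k(C_0)$ in that range are smooth of the expected dimension --- one has $GP_{18}=\M^3_{18,16}$ in a neighborhood of $[C_0]$, and the component statement follows. This finite closed decomposition is precisely the content your classification step (i) needs in order to be a finite check rather than an unbounded one. Finally, you need not re-derive $\mathrm{codim}\,\M^3_{18,16}=2$ by degeneration: both bounds are quoted from Steffen; only the irreducibility of $\M^3_{18,16}$, which the paper takes as given, lies outside this circle of ideas.
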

The above conjecture is known to hold for low genera thanks to the work of Castorena for $g\leq8$ (cf. \cite{Ca}), and the author herself in the range $9\leq g\leq 13$ (cf. \cite{LC1}). However, in general not very much is known about the dimension of the loci $GP^r_{g,d}$ and their reciprocal position. Note that when the Brill-Noether number $\rho(g,r,d):=g-(r+1)(g-d+r)$ is negative, the Petri map associated with a $g^r_d$ on a genus $g$ curve is authomatically non-injective for dimension reasons and the locus $GP^r_{g,d}$ coincides with the Brill-Noether locus
$$M^r_{g,d}:=\{[C]\in\M_g\,\vert\,W^r_d(C)\neq\emptyset\}.$$
This is an irreducible divisor when $\rho(g,r,d)=-1$ \cite{EH}. However, as soon as $\rho(g,r,d)\leq-2$,  the codimension of $M^r_{g,d}$ in $M_g$ is at least $2$ \cite{St}. Hence, Conjecture \ref{sernesi} would force any Brill-Noether locus $M^r_{g,d}$ with $\rho(g,r,d)\leq -2$ to be contained in some other loci $GP^s_{g,e}$ filling up a divisorial component of $GP_g$. In the present paper we disprove this fact:
\begin{thm}\label{main}
The Brill-Noether locus $M^3_{18,16}$ is an irreducible component of the Gieseker-Petri locus $GP_{18}$ having codimension $2$ in $M_{18}$.
\end{thm}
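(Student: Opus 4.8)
The plan is to promote Steffen's codimension inequality to an equality and, more importantly, to locate one completely understood curve at which $GP_{18}$ cannot be any larger than $M^3_{18,16}$. First I record the numerology and the two reformulations that drive everything. With $g=18$, $r=3$, $d=16$ one computes $\rho(18,3,16)=18-4\cdot 5=-2$, so every curve carrying a $g^3_{16}$ has non-injective Petri map for dimension reasons; hence $M^3_{18,16}=GP^3_{18,16}$ and, by \cite{St}, this locus has codimension at least $2$ in $M_{18}$. The residual of a $g^3_{16}$ is a $g^4_{18}$, and residuation identifies $M^3_{18,16}=M^4_{18,18}=GP^4_{18,18}$ as subsets of $M_{18}$, a coincidence I will exploit below. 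Writing $0\to M_A\to H^0(C,A)\otimes\oo_C\to A\to 0$ for the kernel bundle of a globally generated $A$ and tensoring by $B:=\omega_C\otimes A^\vee$ gives $\ker\mu_{0,A}\cong H^0(C,M_A\otimes B)$, so that $\dim\ker\mu_{0,A}\ge h^0(A)h^0(B)-g=20-18=2$, matching $\rho=-2$.

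Next I would pin down the dimension and irreducibility. Steffen's bound gives $\dim M^3_{18,16}\le 49$; for the reverse inequality I would parametrize the $g^3_{16}$'s by their image curves, i.e. by smooth irreducible non-degenerate curves of degree $16$ and genus $18$ in $\mathbb{P}^3$ embedded by a complete linear system (such curves exist, genus $18$ being far below the Castelnuovo bound for degree $16$). The relevant Hilbert scheme has dimension at least $\chi(N_{C/\mathbb{P}^3})=4d=64$ at every point; quotienting by $PGL_4$, which acts with finite stabilizers, and mapping to $M_{18}$ with generically finite fibres — finiteness being the assertion that a general such curve carries only finitely many $g^3_{16}$'s — yields $\dim M^3_{18,16}\ge 64-15=49$. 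Hence the codimension is exactly $2$, and irreducibility of $M^3_{18,16}$ would follow from irreducibility of the dominating component of this Hilbert scheme (namely the smooth curves with $H^1(N_{C/\mathbb{P}^3})=0$).

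The decisive step, the one that actually contradicts Conjecture \ref{sernesi}, is to prove that $M^3_{18,16}$ is a genuine component of $GP_{18}$. Since $GP_{18}=\bigcup_{(s,e)}GP^s_{18,e}$ is a finite union of closed subsets, it suffices to produce one curve $[C_0]\in M^3_{18,16}$ lying in no stratum other than $GP^3_{18,16}=GP^4_{18,18}=M^3_{18,16}$: then all remaining (closed) strata miss a neighbourhood $U$ of $[C_0]$, so $U\cap GP_{18}=U\cap M^3_{18,16}$, and the component of $GP_{18}$ through $[C_0]$ is exactly that of $M^3_{18,16}$, which by irreducibility is all of $M^3_{18,16}$. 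To manufacture such a $C_0$ I would embed it in a K3 surface $S$ whose Picard group has rank $2$ and a carefully chosen intersection form, so arranged that the $g^3_{16}$ is cut out by a class of $\Pic(S)$ and that its Lazarsfeld--Mukai bundle $\E_{C_0,A}$ is a non-split extension of two line bundles pulled back from $S$. Non-simplicity of $\E_{C_0,A}$ is precisely what produces $\ker\mu_{0,A}\ne 0$, while the minimality of the lattice is designed to guarantee that $C_0$ has the generic Clifford index $\Cliff(C_0)=8$ and is Petri-general for every series except the $g^3_{16}$ and its residual.

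The main obstacle is exactly this last control: one must enumerate every linear series $g^s_e$ on $C_0$ that could have non-injective Petri map — including those with base points and those interacting with the gonality pencil — and prove that each associated Lazarsfeld--Mukai bundle is simple. Here the Brill--Noether theory of curves on K3 surfaces intervenes, since a Petri failure would force a destabilizing sub-line-bundle of the relevant bundle to lie in $\Pic(S)$, and the point of fixing a rank-$2$ lattice with the right discriminant is to exclude every such sub-bundle save the one responsible for the $g^3_{16}$. Carrying this enumeration out exhaustively, and then transferring the resulting Petri-generality from the special curve $C_0$ to a general member of the irreducible locus $M^3_{18,16}$ (an open condition), is where essentially all of the work resides.
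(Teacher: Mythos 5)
Your overall skeleton is the paper's: reduce everything to exhibiting one curve $[C_0]\in M^3_{18,16}$, realized as a general member of $|C|$ on a quartic $K3$ surface with $\Pic(S)=\Z H\oplus \Z C$, whose Petri maps are controlled via slope-stability and simplicity of Lazarsfeld--Mukai bundles, so that $[C_0]$ avoids every closed stratum $GP^s_{18,e}$ other than $M^3_{18,16}$ itself. The genuine gap lies in the one family of series that \emph{no} choice of lattice can exclude: the non-complete nets $(\oo_C(H),V)$ with $\dim V=3$, which exist on every curve of $M^3_{18,16}$. This is exactly the mechanism (Proposition \ref{noncomp}) that swallows the codimension-two Brill--Noether loci in genera $14\le g\le 17$; in genus $18$ one is at the borderline $\rho(g,r,d)=d-g$, so injectivity of $\mu_{0,V}$ for these nets must be \emph{proved}, not arranged by lattice arithmetic. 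For such series your criterion --- Petri failure forces non-simplicity, and non-simplicity forces a destabilizing sub-line bundle in $\Pic(S)$ --- breaks down: the bundle $E_V:=E_{C,(\oo_C(H),V)}$ has $h^1(E_V)=1$, and by Proposition \ref{pare} one only gets $\dim\ker\mu_{0,V}=h^0(E_V^\vee\otimes\omega_C\otimes\oo_C(-H))-1$, which is \emph{not} computed by $\Hom(E_V,E_V)$; simplicity of $E_V$ does not by itself give Petri injectivity. The paper's Proposition \ref{noncomplete} needs, beyond simplicity (itself obtained by a Quot-scheme parameter count, since the splitting $E_W\oplus\oo_S(C-H)$ is lattice-admissible), a separate diagram chase with the universal extension \eqref{uni} showing that every surjection $E_V\to\oo_C(C-H)$ differs from the defining one by an automorphism. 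Nothing in your proposal supplies this step, and it is the crux: were it to fail, one would have $M^3_{18,16}\subset GP^2_{18,16}$ and the theorem would be false. Related to this, you must also control the kernel of $\mu_{0,\oo_C(H)}$ from \emph{above}: if $\dim\ker\mu_{0,\oo_C(H)}\ge 3$, Eisenbud's theorem on linear sections of determinantal varieties (the tool in Proposition \ref{noncomp}) would force a rank-dropping tensor in the kernel, i.e.\ a non-complete $g^2_{16}$ with Petri failure; the paper's computation $h^0(E_{C,\oo_C(H)}\otimes E_{C,\oo_C(H)}^\vee)=3$ in Proposition \ref{prop:iuppi} is exactly this upper bound, and your proposal does not address it.

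Three secondary points. First, even for complete pencils and nets, lattice arithmetic alone does not yield simplicity: the split bundles $\oo_S(H)\oplus\oo_S(C-H)$ and $E_1\oplus\oo_S(C-H)$ are perfectly compatible with the lattice, and the paper eliminates them by parameter counts (moduli of the stable summand plus a Grassmann bundle failing to dominate $|C|$), as in Propositions \ref{pencils} and \ref{nets}; these counts must be part of your plan. Second, your dimension argument is partly circular and partly unnecessary: the lower bound $\dim M^3_{18,16}\ge 49$ via the Hilbert scheme requires generic finiteness of the $g^3_{16}$'s, which you have not established at that stage, whereas Steffen's theorem already gives codimension $\le -\rho=2$; moreover, irreducibility of $M^3_{18,16}$ does not follow from irreducibility of one Hilbert-scheme component unless you show that component dominates \emph{every} component of $M^3_{18,16}$ (curves whose $g^3_{16}$ is not very ample escape your parametrization). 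Third, a factual slip: the Lazarsfeld--Mukai bundle of the $g^3_{16}$ has rank $4$, not $2$, and the relevant extension $0\to\oo_S(C-H)\to E_{C,\oo_C(H)}\to E_{H,\omega_H}\to 0$ is shown in the paper to \emph{split}; it is this splitting, not a non-split extension of line bundles, that produces $\dim\ker\mu_{0,\oo_C(H)}=2$.
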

Now we summarize the results in the literature that concern the loci $GP^r_{g,d}$ with  $\rho(g,r,d)\geq 0$. It was proved by Farkas \cite{F2,F3}  that they always carry a divisorial component. If moreover $\rho(g,r+1,d)<0$, then $GP^r_{g,d}$ has pure codimension $1$ outside $M^{r+1}_{g,d}$ by the work of Bruno and Sernesi \cite{BS}. The problem remains open whether the loci $GP^r_{g,d}$ have pure codimension $1$ in $M_g$ as soon as $\rho(g,r,d)\geq 0$; this guess looks more plausible than Conjecture \ref{sernesi}, even though it is known to hold in very few special cases, namely, when $\rho(g,r,d)=0$ and for the locus $GP^1_{g,g-1}$ parametrizing curves with a vanishing theta-null.

It is worth spending some words on the reason why our counterexample occurs in genus $18$ and not before.  Conjecture \ref{sernesi} up to genus $13$ was proved  by verifying that all the loci $GP^r_{g,d}$ whose codimension is either unknown or strictly larger than $1$ are contained in some divisorial components of $GP_g$; the proof realizes on some general inclusions holding in any genus (that we recall here in Proposition \ref{prop:gp}) along with a few ad hoc arguments. However, similar arguments in genus $14$ fail to control the codimension $2$ Brill-Noether locus $M^3_{14,13}$. This is the first case that highlights the (somehow unexpected) relevance of non-complete linear series in determining the relative position of the loci $GP^r_{g,d}$: it turns out that any genus $14$ curve with a $g^3_{13}$ also possesses a non-complete linear series $g^2_{13}$ with non-injective Petri map. In particular, this implies that $M^3_{14,13}$ is contained in $GP^2_{14,13}$. This phenomenon involving non-complete linear series  occurs any time that $d-g<\rho(g,r,d)<0$ (cf. Proposition \ref{noncomp}). All together, the results in Section \S \ref{due} suggests $18$ to be the lowest genus in which a Brill-Noether locus of codimension $\geq 2$ may provide a counterexample to Conjecture \ref{sernesi} (cf. Remark \ref{genus}). Furthermore, the same results in genus $18$ reduce Theorem \ref{main} to the following:
\begin{thm}\label{explicit}
There exists a smooth irreducible curve $C\subset\mathbb{P}^3$ of degree $16$ and genus $18$ such that all the varieties $G^3_{17}(C)$, $G^2_d(C)$ for $14\leq d\leq 17$ and $G^1_k(C)$ for $10\leq k\leq 17$ are smooth of the expected dimension.
\end{thm}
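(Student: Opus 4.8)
The plan is to realise $C$ as a general member of a complete linear system on a well-chosen $K3$ surface, and then to extract its entire Brill--Noether theory from the Picard lattice by means of Lazarsfeld--Mukai bundles. Concretely, I would search for a $K3$ surface $S$ whose Picard group has rank $2$ and is generated by two classes $H$ and $C$ with intersection form
\begin{equation*}
H^2=4,\qquad H\cdot C=16,\qquad C^2=34.
\end{equation*}
Here $H$ is meant to be a very ample quartic polarisation, so that $S\subset\mathbb P^3$, while the smooth members of $|C|$ have genus $18$ and degree $H\cdot C=16$ in $\mathbb P^3$. This rank-$2$ lattice has signature $(1,1)$ and embeds in the $K3$ lattice, so the surjectivity of the period map produces such an $S$; Saint--Donat's criteria then let me arrange (after possibly refining the lattice) that $H$ is very ample with $h^0(\oo_S(H))=4$, that $|C|$ is base-point free with smooth irreducible general member, and that $C$ is non-degenerate in $\mathbb P^3$. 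The hyperplane series $A_0:=\oo_C(H)$ is then a complete $g^3_{16}$ with $\rho(18,3,16)=-2$, whence $[C]\in M^3_{18,16}$; this is exactly the series whose Petri map must fail to be injective.

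The reduction driving everything is infinitesimal Brill--Noether theory: since every series occurring in the statement has $\rho\ge 0$, the scheme $G^r_d(C)$ is smooth of the expected dimension $\rho$ at a point $(A,V)$ if and only if the Petri map $\mu_{0,V}$ is injective there. As $\mu_{0,V}$ is the restriction of the full Petri map $\mu_{0,A}$, it suffices to prove injectivity of $\mu_{0,A}$ for the complete base-point-free series $A$ of each admissible degree. For a curve on a $K3$ surface this is governed by the Lazarsfeld--Mukai correspondence: non-injectivity of $\mu_{0,A}$ forces the Lazarsfeld--Mukai bundle $\E_{C,A}$, of rank $h^0(A)$ with $c_1(\E_{C,A})=[C]$ and $c_2(\E_{C,A})=\deg A$, to carry a saturated sub-bundle $\F$ whose determinant $c_1(\F)$ is a non-zero class of $\Pic(S)$. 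The rank $\rk\F$ together with the intersection numbers $c_1(\F)\cdot H$, $c_1(\F)\cdot C$ and the self-intersection $c_1(\F)^2$ are then constrained by the numerics of $A$ and by the Hodge index theorem on $S$.

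The heart of the argument is a finite case analysis over the pairs $(r,d)$ appearing in the statement --- that is $(3,17)$, $(2,d)$ for $14\le d\le 17$ and $(1,k)$ for $10\le k\le 17$. For each of them I would write a putative destabilising determinant as $c_1(\F)=aH+bC$, bound the integers $a$ and $b$ using $0\le c_1(\F)\cdot H\le\deg A$, the Hodge index inequality $(c_1(\F)\cdot H)^2\ge c_1(\F)^2\,H^2$, and the effectivity and nef constraints dictated by $\Pic(S)=\langle H,C\rangle$, and then check that no admissible $(a,b)$ survives. Because the lattice is chosen as generic as possible subject to the three prescribed intersection numbers, all candidate sub-bundles are excluded numerically, so $\E_{C,A}$ is simple and $\mu_{0,V}$ injective throughout the listed ranges; in parallel one verifies that for $A_0=g^3_{16}$ the bundle $\E_{C,A_0}$ genuinely splits, in accordance with $\rho=-2$. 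Series with base points are dispatched by passing to their moving part, a complete series of strictly smaller degree already covered by the list.

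I expect the principal obstacle to be the simultaneous calibration of the single lattice $\langle H,C\rangle$: it must make $H$ very ample and $|C|$ sufficiently generic (so that $C$ is smooth, irreducible and non-degenerate), realise the prescribed $g^3_{16}$, and at the same time forbid every one of the finitely many candidate destabilising sub-bundles across all the $G^r_d$ in the statement. Threading a single Noether--Lefschetz-general lattice through all of these constraints, and in particular confining the unavoidable exceptional behaviour precisely to $A_0$ without letting it spill over to the neighbouring $g^2_{16}$, $g^2_{17}$ or $g^3_{17}$, is where the delicate numerical bookkeeping --- and the genuine content of the theorem --- resides.
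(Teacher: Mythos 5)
Your framework --- a rank-$2$ $K3$ lattice with $H^2=4$, $H\cdot C=16$, $C^2=34$, a general $C\in|C|$, and Lazarsfeld--Mukai bundles --- is indeed the paper's framework, but the reduction you build on it has a fatal gap. You claim that, since $\mu_{0,V}$ is a restriction of $\mu_{0,A}$, it suffices to prove injectivity of $\mu_{0,A}$ for the complete base-point-free series of each admissible degree. That implication is correct for every non-complete series \emph{except} those whose completion has non-injective Petri map, and on this surface there is exactly one such completion: $A_0=\oo_C(H)$, the complete $g^3_{16}$ with $\rho(18,3,16)=-2$, whose Petri map is non-injective for dimension reasons (its kernel is in fact $2$-dimensional). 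Every $3$-dimensional subspace $V\subset H^0(A_0)$ is a non-complete $g^2_{16}$, hence an honest point of $G^2_{16}(C)$, and smoothness of $G^2_{16}(C)$ at such a point requires $\ker\mu_{0,A_0}\cap\bigl(V\otimes H^0(\omega_C\otimes A_0^\vee)\bigr)=0$ for \emph{every} such $V$; nothing about complete series can give this, precisely because the ambient map $\mu_{0,A_0}$ is non-injective. Worse, the Lazarsfeld--Mukai dictionary you invoke degenerates here: for a non-complete $(A,V)$ one has $h^1(E_{C,(A,V)})\neq 0$, and simplicity of the bundle no longer governs $\ker\mu_{0,V}$. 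This is exactly why the paper needs its hardest argument, Proposition \ref{noncomplete}: the universal extension \eqref{uni}, a Quot-scheme dimension bound, and a diagram chase showing $\dim\Hom(E_V,\oo_C(C-H))=1$. No choice of lattice can make this issue disappear, since the dangerous $g^2_{16}$'s live inside the $g^3_{16}$ whose existence is the entire point of the construction; your closing paragraph names this worry but your method, as stated, declares it solved by fiat.

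There is a second, independent gap: it is not true that all candidate destabilizing subsheaves are ``excluded numerically.'' Several configurations pass every lattice-theoretic test and genuinely exist on $S$: for pencils, $\oo_S(H)\oplus\oo_S(C-H)$ is a rank-$2$ bundle with precisely the Chern classes of a Lazarsfeld--Mukai bundle of a $g^1_{12}$, and analogous split bundles $M\oplus E/M$ survive the numerics in the $g^2_d$ and $g^3_{17}$ analyses. The paper eliminates these not by lattice arithmetic but by parameter counts: the stable factor moves in a moduli space of dimension given by \eqref{dim}, one forms a Grassmann bundle over it, and one checks that the induced rational map to $|C|$ is not dominant, so such bundles arise only from curves that are special in $|C|$. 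Without this second mechanism --- generality of $C$ in its linear system, used both through these counts and through Proposition \ref{pare} --- your case analysis cannot be closed even for the complete series.
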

A curve $C$ as in the above statement is realized in Section \ref{quattro} as a section of a smooth quartic $K3$ surface $S\subset \mathbb P^3$ of Picard number $2$. The Brill-Noether behavior of $C$ is analyzed by means of non-trivial techniques involving higher rank Lazarsfeld-Mukai bundles, that were partially developed in \cite{LC2,LC3}. The definition and some basic properties of Lazarsfeld-Mukai bundles are preliminarly recalled in Section \S \ref{tre}, where they are stated in such a way that they  hold also for non-complete linear series. In fact, the most involving part in the proof of Theorem \ref{explicit} turns out to be the control of the Petri map associated with non-complete linear series of type $g^2_{16}$ (cf. Proposition \ref{noncomplete}): this because the Lazarsfeld-Mukai bundle associated with a non-complete linear series has non-vanishing $h^1$ and thus its automorphism group does not always govern the kernel of the Petri map (even if one chooses $C$ to be general in its linear system).

It is very plausible that one may construct counterexamples to Conjecture \ref{sernesi} for infinitely many genera using curves on $K3$ surfaces. However, when the genus becomes higher, the number of components of the Gieseker-Petri locus $GP_g$ increases and, given a curve $[C]\in M^r_{g,d}$, it will become more and more challenging to exclude that $[C]$ lies in some divisorial component of $GP_g$. In particular, if $C$ is contained in a $K3$ surface $S$, Lazarsfeld-Mukai bundles of very high rank will be involved in the computation. 

\textbf{Acknowledgements:} This problem was suggested to me during my Ph.D. program by my advisor Gavril Farkas: I thank him for many valuable conversations on the topic. I also thank Alessandro D'Andrea for interesting discussions concerning non-complete linear series. The author is a member of PRIN-2017 Project {\em Moduli Theory and Birational Classification} and of GNSAGA.

\section{Components of the Gieseker-Petri locus}\label{due}
In order to determine the irreducible components of $GP_g$, it is necessary to understand the reciprocal position of the loci $GP^r_{g,d}$. We summarize some inclusions holding in any genus (cf. Section 2 in \cite{LC1}):
\begin{prop}\label{prop:gp}
One has that:
\begin{enumerate}
\item[(i)] If $\rho(g,r,d+1)<0$, then $M^r_{g,d}\subset M^r_{g,d+1}$.
\item[(ii)] If $\rho(g,r-1,d-1)<0$ and $r>1$, then $M^r_{g,d}\subset M^{r-1}_{g,d-1}$.
\item[(iii)] If $\rho(g,r,d)\in\{0,1\}$, then $\M^r_{g,d-1}\subset GP^r_{g,d}$ and $\M^{r+1}_{g,d+1}\subset GP^r_{g,d}$.
\item[(iv)] If $d<\lfloor \frac{g+3}{2}\rfloor$, then  $\M^1_{g,d}$ is contained in the Brill-Noether divisor $\M^1_{g,(g+1)/2}$ if $g$ is odd and in the divisor $GP^1_{g,(g+2)/2}$ if $g$ is even. Furthermore, in the latter case any curve in $GP^1_{g,(g+2)/2}$ has a base point free $g^1_{(g+2)/2}$ for which the Petri map is non-injective.
\end{enumerate}
\end{prop}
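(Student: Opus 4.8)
I would split the four parts into formal base-point manipulations (parts (i), (ii)) and genuine multiplication-map arguments (parts (iii), (iv)). For (i), given $[C]\in M^r_{g,d}$, I take $A\in W^r_d(C)$ and any point $p\in C$; the sequence $0\to A\to A\otimes\oo_C(p)\to A\otimes\oo_C(p)|_p\to 0$ gives $h^0(A\otimes\oo_C(p))\ge h^0(A)\ge r+1$, so $A\otimes\oo_C(p)\in W^r_{d+1}(C)$ and $[C]\in M^r_{g,d+1}$. For (ii), with $r>1$, I take $A\in W^r_d(C)$ and $p\in C$; now $0\to A\otimes\oo_C(-p)\to A\to A|_p\to 0$ yields $h^0(A\otimes\oo_C(-p))\ge h^0(A)-1\ge r$, whence $A\otimes\oo_C(-p)\in W^{r-1}_{d-1}(C)$. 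In both cases the set-theoretic inclusion needs no hypothesis; the assumptions $\rho(g,r,d+1)<0$, respectively $\rho(g,r-1,d-1)<0$ and $r>1$, serve only to guarantee that the target Brill--Noether locus is a proper subvariety of $M_g$, so that the statement is not vacuous.

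For the first inclusion in (iii), the plan is to use a base point to create a Petri kernel. I pick $A\in W^r_{d-1}(C)$ with $h^0(A)=r+1$ (if $h^0(A)>r+1$ the source below only enlarges and the conclusion holds a fortiori), and form the $g^r_d$ consisting of $L:=A\otimes\oo_C(p)$ together with $V:=H^0(A)\subset H^0(L)$, a series having $p$ as a base point. Writing products as $v\otimes w\mapsto v\cdot w$ identifies $\mu_{0,V}$ with the restriction of the Petri map of $A$ to $H^0(A)\otimes H^0(\omega_C\otimes A^\vee\otimes\oo_C(-p))$; since each such product vanishes at $p$, its image lies in $H^0(\omega_C\otimes\oo_C(-p))$. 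Thus $\mu_{0,V}$ becomes a map whose source has dimension $(r+1)(g-d+r)=g-\rho(g,r,d)$ and whose target has dimension $g-1$. When $\rho(g,r,d)=0$ the source strictly dominates the target, so $\ker\mu_{0,V}\ne 0$ for every $p$ and $[C]\in GP^r_{g,d}$ at once.

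The hard part will be the case $\rho(g,r,d)=1$, where source and target both have dimension $g-1$ and I must exhibit a $p$ at which $\mu_{0,V}$ degenerates. If $\omega_C\otimes A^\vee$ has a base point $p_0$, the source jumps to dimension $g+r$ and we are done at $p_0$; so I may assume $\omega_C\otimes A^\vee$ base-point free. Then, as $p$ varies over $C$, the maps $\mu_{0,V}$ are the fibres of a morphism $H^0(A)\otimes \mathcal M_{\omega_C\otimes A^\vee}\to \mathcal M_{\omega_C}$ of vector bundles of equal rank $g-1$, where $\mathcal M_{\mathcal F}$ denotes the kernel of the evaluation map of a globally generated $\mathcal F$ (so $\deg\mathcal M_{\mathcal F}=-\deg\mathcal F$). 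Its degeneracy locus is the zero scheme of a section of $\det\mathcal M_{\omega_C}\otimes(\det\mathcal M_{\omega_C\otimes A^\vee})^{-(r+1)}$, whose degree computes to $r(g-r-2)$; this is positive for $g\ge r+3$ (the sporadic smaller cases being checked directly), so the section vanishes at some $p$ and yields $[C]\in GP^r_{g,d}$. The second inclusion then follows by Serre duality: for $B\in W^{r+1}_{d+1}(C)$ the bundle $\omega_C\otimes B^\vee$ lies in $W^{g-d+r-1}_{2g-3-d}(C)$, and since $\rho(g,g-d+r-1,2g-2-d)=\rho(g,r,d)$, applying the first inclusion produces a $g^{g-d+r-1}_{2g-2-d}$ with non-injective Petri map, whose Serre-dual $g^r_d$ has the transposed, hence equally nonzero, Petri kernel.

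Finally, in (iv) the bound $d<\lfloor(g+3)/2\rfloor$ lets me iterate (i) to embed $M^1_{g,d}$ into $M^1_{g,\lfloor(g+1)/2\rfloor}$ by repeatedly adding base points. For $g$ odd this target is $M^1_{g,(g+1)/2}$, which has $\rho=-1$ and is the irreducible Brill--Noether divisor of Eisenbud--Harris. For $g$ even the iteration reaches $M^1_{g,g/2}$, and then (iii) with $r=1$ and $d=(g+2)/2$ (where $\rho=0$) gives $M^1_{g,g/2}\subset GP^1_{g,(g+2)/2}$, a divisor. For the concluding refinement I would invoke the base-point-free pencil trick, which identifies the Petri kernel of a base-point-free pencil $(L,V)$ of degree $(g+2)/2$ with $H^0(\omega_C\otimes L^{-2})$: expressing a witnessing pencil as its base-point-free part plus base locus and tracking how $H^0(\omega_C\otimes L^{-2})$ changes under removal of base points shows that the non-injectivity can always be realised by a genuinely base-point-free pencil of the same degree. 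Controlling this base locus is, together with the equal-rank degeneracy computation of (iii), the only non-formal point; the remaining inclusions are the base-point manipulations above.
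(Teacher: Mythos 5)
Your parts (i) and (ii) are correct and coincide with the paper's treatment (the paper proves (i) exactly this way and quotes \cite{LC1} for (ii)), and your reading of the numerical hypotheses as mere non-vacuousness conditions is accurate. Your first inclusion in (iii) is also sound and is in fact more self-contained than the paper's, which simply refers to \cite{LC1}: the $\rho=0$ count and, for $\rho=1$, the equal-rank bundle map $H^0(A)\otimes \mathcal{M}_{\omega_C\otimes A^\vee}\to \mathcal{M}_{\omega_C}$ whose determinant is a section of a line bundle of degree $r(g-r-2)$ are both correct. One caution: the positivity of that degree is not a matter of ``checking sporadic small cases''. Inside the range $2r\le d\le g-1$ in which the proposition is used, $\rho(g,r,d)=1$ gives $(r+1)(g-d+r)=g-1$ with $g-d+r\ge r+1$, hence $g\ge (r+1)^2+1>r+2$; outside that range the statement itself can fail (for $(g,r,d)=(3,1,3)$ every $g^1_3$ on a hyperelliptic curve is $g^1_2+x$ and has injective Petri map, so $M^1_{3,2}\not\subset GP^1_{3,3}$), so those cases must be excluded, not verified.

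The genuine gap is your second inclusion in (iii). Transposing a kernel tensor of the $g^{s}_{2g-2-d}$, $s=g-d+r-1$, produced by the first inclusion only shows that the Petri map of the \emph{complete} residual series $A'=\omega_C\otimes L'^\vee$ is non-injective. Membership in $GP^r_{g,d}$ requires a pair $(A',V)$ with $\dim V=r+1$ whose restricted Petri map has a kernel, and for that you need either $h^0(A')=r+1$ or a bound on the tensor rank of the kernel element, so that it fits inside $V\otimes H^0(L')$ for some $(r+1)$-dimensional $V$; neither is automatic (rank bounds of this kind are exactly what Eisenbud's theorem is invoked for in Proposition \ref{noncomp}). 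Unwinding your construction, $A'=B(-p)$ where $B$ is the given $g^{r+1}_{d+1}$, so $h^0(A')=r+1$ holds precisely when $B$ is base point free with $h^0(B)=r+2$; this is the primitive case, and it is exactly the caveat in the paper's proof (``use a general point \ldots in case the latter is not primitive''). The remaining cases are easy --- if $B$ has a base point or $h^0(B)\ge r+3$, the Petri map of any resulting $g^r_d$ has source of dimension $\ge g+r$ and non-injectivity is automatic --- but your argument as written neither establishes $h^0(A')=r+1$ in the main case nor treats the others.

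In (iv) the two inclusions follow from (i) and (iii) as you say, but your sketch of the ``Furthermore'' clause does not prove it. The pencil-trick reduction goes the wrong way: stripping the base locus $\Delta$ (of degree $b$) from a witnessing pencil yields a base point free pencil $A_0$ of \emph{smaller} degree with $H^0(\omega_C\otimes A_0^{-2})\neq 0$, and the whole difficulty is to climb back to degree exactly $(g+2)/2$ while keeping both base-point-freeness and this non-vanishing; for that one would need $2\Delta'$, $\deg\Delta'=b$, to lie in a member of $|\omega_C\otimes A_0^{-2}|$, which can fail when that system consists of reduced divisors. Indeed no formal tracking can succeed: with this paper's definition of $GP^1_{g,(g+2)/2}$ (base points allowed), a hyperelliptic curve of genus $g\equiv 0\pmod 4$ lies in $GP^1_{g,(g+2)/2}$ --- take $g^1_2$ plus base points --- yet carries no base point free pencil of the odd degree $(g+2)/2$ at all, since every special pencil on it is $g^1_2$ plus base points. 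So this clause depends on the precise setting and hypotheses of \cite{LC1} (Lemma 2.3 and Corollary 2.4, which the paper cites rather than reproves), and your proposal does not establish it.
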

\begin{proof}
Item (i) is straightforward: if $[C]\in M^r_{g,d}$ and $A\in \Pic^d(C)$ satisfies $h^0(A)\geq r+1$, then for any point $P\in C$ the line bundle $A(P)\in \Pic^d(C)$ satisfies the inequality $h^0(A(P))\geq r+1$, too. 

Item (ii) is \cite[Lem. 2.2]{LC1}, while item (iv) is proved in \cite[Lem. 2.3 and Cor. 2.4]{LC1}.

To obtain item (iii) one can proceed as in the proof of \cite[Lem. 2.5]{LC1} having the foresight to use a general point of $C$ in order to construct a $g^r_d$ from the  $g^r_{d-1}$ or the $g^{r+1}_{d+1}$, in case the latter is not primitive. 
\end{proof}
The above inclusions have been used in \cite{LC1} in order to prove that the Gieseker-Petri locus has pure codimension $1$ in $\M_g$ for $g\leq 13$. However, already in genus $14$ they do not imply the inclusion of the Brill-Noether locus $\M^3_{14,13}$ (which has codimension $2$ in $\M_{14}$) neither in a component of type $GP^r_{14,d}$ with $\rho(14,r,d)\geq 0$ nor in a Brill-Noether divisor. The following result takes care of this component and thus motivates why our counterexample occurs in genus $18$ and not before.
\begin{prop}\label{noncomp}
Let $g,r,d$ be integers such that $\rho(g,r,d)<d-g<0$. Then any genus $g$ curve with a complete $g^r_d$ also possesses a non-complete $g^{r-1}_d$ for which the Petri map is non-injective. In particular, this implies the inclusion $\M^r_{g,d}\setminus \M^{r+1}_{g,d}\subset GP^{r-1}_{g,d}$.
\end{prop}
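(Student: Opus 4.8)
The plan is to manufacture the required non-complete $g^{r-1}_d$ as a codimension-one subspace of the sections of the given complete $g^r_d$, by locating a \emph{low-rank} element inside the kernel of the full Petri map. Fix a curve $C$ of genus $g$ carrying a complete $g^r_d=|A|$, so that $h^0(A)=r+1$ and $\deg A=d$, and set $U:=H^0(C,A)$ and $W:=H^0(C,\omega_C\otimes A^\vee)$. By Riemann--Roch $\dim W=h^1(A)=r+g-d=:s$, and the standing assumption $d-g<0$ forces $s>r$. The key observation is that a nonzero $\kappa\in U\otimes W$ lies in $V\otimes W$ for some hyperplane $V\subsetneq U$ \emph{precisely} when $\rk\kappa\le r$: the span $U'\subseteq U$ of the $U$-components of $\kappa$ has dimension $\rk\kappa$, so if $\rk\kappa\le r$ one may pick $V$ with $U'\subseteq V$ and $\dim V=r$, whereupon $\kappa\in V\otimes W$. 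Thus it suffices to find a nonzero kernel element of the Petri map having rank at most $r$.

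First I would bound the Petri kernel from below. Rank--nullity applied to $\mu_{0,A}\colon U\otimes W\to H^0(C,\omega_C)$ gives
\[
\dim\ker\mu_{0,A}\ \ge\ \dim(U\otimes W)-g\ =\ (r+1)(r+g-d)-g\ =\ -\rho(g,r,d),
\]
the last equality being merely the definition of $\rho$.

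Next I would bring in determinantal geometry. Identifying $U\otimes W$ with $\Hom(U^\vee,W)$, i.e.\ with $s\times(r+1)$ matrices, let $\Sigma\subset U\otimes W$ be the cone of tensors of rank $\le r$; it is an irreducible subvariety of codimension $(s-r)\big((r+1)-r\big)=s-r=g-d$. Since $\ker\mu_{0,A}$ is a linear subspace of $U\otimes W$, I would pass to $\mathbb{P}(U\otimes W)$ and intersect the linear space $\mathbb{P}(\ker\mu_{0,A})$ with the projective variety $\mathbb{P}(\Sigma)$. Here is exactly where the hypothesis enters: $\rho(g,r,d)<d-g$ gives $-\rho\ge g-d+1$, hence
\[
\dim\mathbb{P}(\ker\mu_{0,A})\ \ge\ -\rho-1\ \ge\ g-d\ =\ \operatorname{codim}\mathbb{P}(\Sigma),
\]
so the projective dimension theorem guarantees a nonempty intersection and produces a nonzero $\kappa\in\ker\mu_{0,A}$ with $\rk\kappa\le r$. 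Choosing a hyperplane $V\subset U=H^0(A)$ of dimension $r$ containing the span of the $U$-components of $\kappa$, the pair $(A,V)$ is a non-complete $g^{r-1}_d$ whose Petri map $\mu_{0,V}$, being the restriction of $\mu_{0,A}$ to $V\otimes W$, annihilates $\kappa\neq0$ and is therefore non-injective. For the closing inclusion, given $[C]\in\M^r_{g,d}\setminus\M^{r+1}_{g,d}$ one selects $A$ with $h^0(A)=r+1$ exactly, so that $|A|$ is a complete $g^r_d$, and applies the construction to conclude $[C]\in GP^{r-1}_{g,d}$.

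I expect the main obstacle to be entirely bookkeeping rather than conceptual: pinning down the codimension of the rank-$\le r$ locus $\Sigma$ as $s-r$, matching it against the kernel-dimension bound $-\rho$ so that the inequality $\dim\mathbb{P}(\ker\mu_{0,A})\ge\operatorname{codim}\mathbb{P}(\Sigma)$ holds \emph{exactly} under $\rho<d-g$, and checking the elementary (but easily miscounted) equivalence between ``$\rk\kappa\le r$'' and ``$\kappa$ lies in $V\otimes W$ for a hyperplane $V$''. I would stress that no global generation of $A$ is needed, since the argument uses only rank--nullity together with the intersection of a linear space and a determinantal variety.
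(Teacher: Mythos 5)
Your proposal is correct and is essentially the paper's own argument: both lower-bound $\dim\ker\mu_{0,A}$ by $-\rho(g,r,d)>g-d$, intersect this kernel with the codimension-$(g-d)$ determinantal locus of tensors of rank $\leq r$ in $H^0(A)\otimes H^0(\omega_C\otimes A^\vee)$ to extract a nonzero low-rank kernel element, and then place its $U$-span inside an $r$-dimensional $V\subset H^0(A)$ to get the non-complete $g^{r-1}_d$ with non-injective Petri map. The only cosmetic differences are that you justify the nonempty intersection via the projective dimension theorem where the paper cites Eisenbud's result \cite{Ei} on linear sections of determinantal varieties, and your argument runs uniformly without the paper's (harmless) preliminary reduction to the case $\rho(g,r-1,d)\geq 0$.
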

\begin{proof}
The statement is trivial if $\rho(g,r-1,d)<0$, so we may assume $\rho(g,r-1,d)\geq 0$. We consider a curve $[C]\in\M^r_{g,d}$ possessing a complete linear series $A$ of type $g^r_d$. The kernel of the Petri map $\mu_{0,A}$ has dimension $\geq  -\rho(g,r,d)>g-d$. On the other hand, the space $Z_r$ of tensors in $H^0(C,A)\otimes H^0(C,\omega_C\otimes A^\vee)$ that do not have maximal rank is a Zariski closed subset of codimension equal to $h^0(\omega_C\otimes A^\vee)-r=g-d$; hence, for any linear subspace $X$ of  $H^0(C,A)\otimes H^0(C,\omega_C\otimes A^\vee)$ one has 
$$
\mathrm{codim}_X(X\cap Z_r)\leq g-d,
$$
cf. \cite{Ei}. One obtains the statement setting $X=\ker\mu_{0,A}$.

\end{proof}
\begin{rem}\label{genus}
When $\rho(g,r,d)=-2$, the inequalities $\rho(g,r,d)<d-g<0$ imply $d=g-1$. Furthermore, for $\rho(g,r,d)=-2$ the locus $\M^r_{g,d}$ has codimension $2$ in $\M_g$, while the codimension of $\M^{r+1}_{g,d}$ is strictly larger \cite{St}; hence, no irreducible components of $\M^r_{g,d}$ is contained in $\M^{r+1}_{g,d}$ in this case and Proposition \ref{noncomp} yields the inclusion $M^r_{g,d}\subset GP^{r-1}_{g,d}$. For instance, we obtain that $\M^3_{14,13}\subset GP^2_{14,13}$. In genus $14$ also all the other Brill-Noether loci $\M^r_{14,d}$ with $\rho(14,r,d)<-1$ are included in some loci $GP^s_{14,e}$ with $\rho(14,s,e)\geq 0$ or in some Brill-Noether divisors thanks to Proposition \ref{prop:gp}. 

Analogously, Propositions \ref{prop:gp} and \ref{noncomp} enable us to control all the Brill-Noether loci of codimension $\geq 2$ in genus $g\in\{15,16,17\}$. In particular, in these genera the Gieseker-Petri locus decomposes as
\begin{equation*}GP_g=\bigcup_{\substack{0<2r\leq d\leq g-1\\ \rho(g,r,d)\geq-1}}GP^r_{g,d}.\end{equation*}
However, this does not prove Conjecture \ref{sernesi} up to genus $17$ since it may still fail for some loci $GP^r_{g,d}$ with $\rho(g,r,d)\geq 0$.
\end{rem}

Now we concentrate on the case $g=18$, where Proposition \ref{prop:gp} provides the fol\-lo\-wing decomposition of the Gieseker-Petri locus:
\begin{equation}\label{18}
GP_{18}=GP^3_{18,17}\cup\M^3_{18,16}\cup\bigcup_{d=14}^{17} GP^2_{18,d}\cup\bigcup_{k=10}^{17}GP^1_{18,k};
\end{equation}
here we have used that $\rho(18,2,14)=0$ in order to conclude that $\M^2_{18, 13}\subset GP^2_{18,14}$.
Since $\rho(18,3,16)=-2$, then $\mathrm{codim}_{\M_{18}}\M^3_{18,16}=2$ (cf. \cite{St}). In order to prove that $\M^3_{18,16}$ is an irreducible component of $GP_{18}$, it is enough to verify that
\begin{equation}\label{uffino}
M^3_{18,16}\not\subset GP^3_{18,17}\cup\bigcup_{d=14}^{17} GP^2_{18,d}\cup\bigcup_{k=10}^{17}GP^1_{18,k}.
\end{equation}
Equivalently, one has to provide a curve $C$ as in Theorem \ref{explicit}.

\section{Lazarsfeld-Mukai bundles}\label{tre}
In this section we recall the definition and basic properties of Lazarsfeld-Mukai bundles extending them to non-complete linear series. Let $C$ be a smooth genus $g$ curve lying on a $K3$ surface $S$ and let $(A,V)$ be a base point free $g^r_d$ on $C$.  The Lazarsfeld-Mukai bundle $E_{C,(A,V)}$ is defined as the dual of the kernel of the evaluation map $V\otimes \oo_S\to A$, and thus sits in the following short exact sequence:
\begin{equation}\label{LM}
0\to V^\vee\otimes\oo_S\to E_{C,(A,V)}\to \omega_C\otimes A^\vee\to 0.
\end{equation}
In particular, $E_{C,(A,V)}$ is globally generated off the base locus of $\omega_C\otimes A^\vee$ and both its Chern classes and cohomology are easily computed from \eqref{LM}:
\begin{itemize}
\item $\rk E_{C,(A,V)}=r+1$, $c_1(E_{C,(A,V)})=C$, $c_2(E_{C,(A,V)})=d$;
\item $h^0(E_{C,(A,V)})=r+1+h^1(A)$, $h^1(E_{C,(A,V)})=h^0(A)-r-1$, $h^2(E_{C,(A,V)})=0$.
\end{itemize}
In particular, $h^1(E_{C,(A,V)})=0$ as soon as the linear series $(A,V)$ is complete, that is, $V=H^0(A)$; in this case one denotes $E_{C,(A,V)}$ simply by $E_{C,A}$. If instead $(A,V)$ is non-complete, the vector bundle constructed as universal extension of $E_{C,(A,V)}$ is naturally isomorphic to $E_{C,A}$, as one can easily check by the very definition of Lazarsfeld-Mukai bundles; in other words, the universal extension looks as follows:
\begin{equation}\label{univ}
0\to H^1(E_{C,(A,V)})\otimes \oo_S\to E_{C,A}\to E_{C,(A,V)}\to 0
\end{equation}
with cocyle $\mathrm{id}\in \Hom(H^1(E_{C,(A,V)}),H^1(E_{C,(A,V)}))$.

The following remark will be useful in the sequel:
\begin{rem}\label{sug}
Let $E_{C,(A,V)}$ be a Lazarsfeld-Mukai bundle on a $K3$ surface $S$ and assume there is a surjective morphism $E_{C,(A,V)}\to N\otimes I_\xi$ for some line bundle $N\in\Pic(S)$ and some $0$-dimensional subscheme $\xi\subset S$.  Since $E_{C,(A,V)}$ is globally generated off a finite set and $h^2(E_{C,(A,V)})=0$, the line bundle $N$ shares the same properties and in particular $N$ is nontrivial. By \cite[Cor. 3.2]{SD}, line bundles on $K3$ surfaces have no base points outside their fixed components and thus $N$ is globally generated. 

This observation can be generalized to higher rank torsion free sheaves using \cite[Lem. 3.3]{LC2} in the following way. Let $Q$ be a torsion free sheaf on $S$ endowed with a surjection $E_{C,(A,V)}\to Q$. The sheaf $Q$ is then globally generated off a finite set and satisfies $h^2(Q)=0$. We can thus apply \cite[Lem. 3.3]{LC2} stating that such a $Q$ satisfies $h^0(\det Q)\geq 2$.   If the $K3$ surface $S$ contains no ($-2$)-curves then $\det Q$ has no fixed components and hence is globally generated again by  \cite[Cor. 3.2]{SD}. 
\end{rem}
For any $r,d$ we denote by $\G^r_d(|C|)$ the variety parametrizing pairs $(C',(A',V'))$ such that $C'\subset S$  is a smooth curve linearly equivalent to $C$, and $(A',V')\in G^r_d(C')$; there is a natural forgetful map $\pi:\G^r_d(|C|)\to |C|$.

Lazarsfeld-Mukai bundles are used in order to control the injectivity of the Petri map. 
\begin{prop}\label{pare}
If $C$ is general in its linear system and $(A,V)\in G^r_d(C)$ is base point free, then:
$$
\dim\ker\mu_{0,V}=h^0(E_{C,(A,V)}^\vee\otimes\omega_C\otimes A^\vee)-1.
$$
If moreover $(A,V)$ is complete, then 
$$
\dim\ker\mu_{0,A}=h^0(E_{C,A}^\vee\otimes E_{C,A})-1
$$
and  $\mu_{0,A}$ is injective if and only if $E_{C,A}$ is simple.
\end{prop}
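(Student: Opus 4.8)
**The plan is to relate the Petri map directly to homomorphisms of the Lazarsfeld-Mukai bundle by dualizing the defining sequence.** The kernel of $\mu_{0,V}$ consists of relations among the products $s\cdot t$ with $s\in V$ and $t\in H^0(\omega_C\otimes A^\vee)$. The key is to recognize that $H^0(\omega_C\otimes A^\vee)$ and $V$ naturally appear in the cohomology of the sequence \eqref{LM} and its dual. First I would apply $\Hom(-,\omega_C\otimes A^\vee)$ (or equivalently tensor the dual sequence with $\omega_C\otimes A^\vee$) to \eqref{LM}, restricted appropriately to $C$, so that $E_{C,(A,V)}^\vee\otimes\omega_C\otimes A^\vee$ acquires a section space whose dimension I can compute. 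The goal is to produce an exact sequence in which $H^0(E_{C,(A,V)}^\vee\otimes\omega_C\otimes A^\vee)$ sits between the scalars (the identity/trivial homomorphism, accounting for the $-1$) and exactly $\ker\mu_{0,V}$.

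The standard mechanism, following Lazarsfeld and Mukai, is this. Restrict \eqref{LM} to $C$; since $E_{C,(A,V)}|_C$ fits in a sequence involving $V^\vee\otimes\oo_C$ and $\omega_C\otimes A^\vee$, one gets a canonical identification of the Petri map with a connecting/multiplication map. Concretely, I expect that $H^0(E_{C,(A,V)}^\vee\otimes\omega_C\otimes A^\vee)$ is computed by the long exact sequence obtained from dualizing \eqref{LM} and tensoring by $\omega_C\otimes A^\vee$: the term $V\otimes H^0(\omega_C\otimes A^\vee)$ appears, and the coboundary into $H^0((\omega_C\otimes A^\vee)^{\otimes 2})\hookrightarrow H^0(\omega_C)$ is precisely $\mu_{0,V}$ up to the natural twist. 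Thus $\ker\mu_{0,V}$ is identified with a subspace of $\Hom$'s, and the trivial homomorphisms $V^\vee\otimes\oo_S\to V^\vee\otimes\oo_S$ that lift contribute the single extra dimension, yielding the claimed formula $\dim\ker\mu_{0,V}=h^0(E_{C,(A,V)}^\vee\otimes\omega_C\otimes A^\vee)-1$. The genericity of $C$ in $|C|$ is used to ensure the relevant cohomology on $S$ behaves well—i.e.\ that no unexpected sections arise from the surface that would spoil the identification.

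For the complete case, where $h^1(E_{C,A})=0$, I would apply $\Hom(E_{C,A},-)$ to the sequence \eqref{LM} (with $V=H^0(A)$). Since $E_{C,A}^\vee\otimes(V^\vee\otimes\oo_S)$ contributes only the scalars $H^0(E_{C,A}^\vee\otimes\oo_S)\otimes V^\vee$ controlled by the triviality of $E_{C,A}$ off $C$, the space $H^0(E_{C,A}^\vee\otimes E_{C,A})$ maps onto $H^0(E_{C,A}^\vee\otimes\omega_C\otimes A^\vee)$, and the first formula specializes to $\dim\ker\mu_{0,A}=h^0(E_{C,A}^\vee\otimes E_{C,A})-1$. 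The final equivalence is then immediate: $E_{C,A}$ is simple precisely when $h^0(E_{C,A}^\vee\otimes E_{C,A})=1$ (the only endomorphisms being scalars), which by the formula is equivalent to $\dim\ker\mu_{0,A}=0$, i.e.\ injectivity of $\mu_{0,A}$.

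**The main obstacle I anticipate is the genericity argument**, namely justifying that for $C$ general in $|C|$ the identification of $\ker\mu_{0,V}$ with the relevant section space holds exactly, with no correction terms coming from the $K3$ surface. One must rule out that restriction to $C$ introduces or kills sections of the twisted endomorphism sheaf in an uncontrolled way; this is where the non-complete case is genuinely more delicate, since $h^1(E_{C,(A,V)})\neq 0$ means the universal extension \eqref{univ} intervenes and the naive correspondence between $\Hom$'s and the Petri kernel can fail for special $C$. Handling this carefully—presumably by a dimension count on $\G^r_d(|C|)$ together with the forgetful map $\pi$, invoking that the general fiber behaves as expected—is the technical heart of the statement.
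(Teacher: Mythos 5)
Your proposal is correct and takes essentially the same route as the paper's proof (itself a sketch of Pareschi's argument): twisting the dual of \eqref{LM} by $\omega_C\otimes A^\vee$ identifies $\ker\mu_{0,V}$ with $H^0(M_{A,V}\otimes\omega_C\otimes A^\vee)$, the sequence $0\to\oo_C\to E_{C,(A,V)}^\vee\otimes\omega_C\otimes A^\vee\to M_{A,V}\otimes\omega_C\otimes A^\vee\to 0$ accounts for the $-1$, and the genericity issue you flag as the technical heart is resolved exactly along the lines you guess: the vanishing of the coboundary $\delta:H^0(M_{A,V}\otimes\omega_C\otimes A^\vee)\to H^1(\oo_C)$ is equivalent to surjectivity of the differential of $\pi:\G^r_d(|C|)\to|C|$ at $(C,(A,V))$, hence holds for general $C$ by Sard's Lemma. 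Your treatment of the complete case (tensoring \eqref{LM} with $E_{C,A}^\vee$, using $h^1(E_{C,A})=0$ via Serre duality to get $H^0(E_{C,A}^\vee\otimes E_{C,A})\simeq H^0(E_{C,A}^\vee\otimes\omega_C\otimes A^\vee)$, whence the simplicity criterion) is also the paper's argument.
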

\begin{proof}
The statement is proved for complete linear series in \cite{P}. We briefly sketch the proof in order to convince the reader that it works for non-complete linear series, as well. The kernel of $\mu_{0,V}$ is isomorphic to $H^0(C,M_{A,V}\otimes \omega_C\otimes A^\vee)$, where $M_{A,V}$ is the kernel of the evaluation map on the curve $V\otimes \oo_C\to A$. On the other hand, one has the following exact sequence:
\begin{equation}\label{ker}
0\to \oo_C\to E_{C,(A,V)}^\vee\otimes\omega_C\otimes A^\vee \to M_{A,V}\otimes \omega_C\otimes A^\vee\to 0.
\end{equation}
If $C$ is general in its linear system, the latter remains exact when we pass to global section. Indeed, the vanishing of the coboundary map $\delta: H^0(M_{A,V}\otimes \omega_C\otimes A^\vee)\to H^1(\oo_C)$ turns out to be equivalent to the surjectivity of the differential of the projection map $\pi:\mathcal G^r_d(|L|)\to|L|$ at the point $(C,(A,V))$; the result thus follows from Sard's Lemma. The last part of the statement follows tensoring \eqref{LM} with $E_{C,(A,V)}^\vee$ and only holds for complete linear series as it requires $h^1(E_{C,(A,V)})=0$.
\end{proof}
We now recall the structure of the Lazarsfeld-Mukai bundle associated with a linear series that is obtained restricting a line bundle $M\in\Pic(S)$ to a curve $C\subset S$. 
\begin{lem}[\cite{LC3} Lemma 4.1]\label{rest}
Let $N\in \Pic(S)$ satisfy $h^0(N)\geq 2$ and $h^1(N)=0$ ; also assume that $M:=\oo_S(C)\otimes N^\vee$ is globally generated and satisfies $h^1(M)=0$. Then the Lazarsfeld-Mukai bundle $E_{C,M\otimes \oo_C}$ sits in the following short exact sequence
\begin{equation}\label{res}
0\to N\to E_{C, M\otimes \oo_C}\to E_{D,\omega_D}\to 0,
\end{equation}
where $D$ is any smooth curve in the linear system $|M|$.
\end{lem}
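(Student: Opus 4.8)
The plan is to realize both Lazarsfeld--Mukai bundles through the \emph{kernel bundle} of $M$ on $S$ and to compare the two presentations by restriction. Write $E:=E_{C,M\otimes\oo_C}$ and introduce $F_M:=\ker\bigl(H^0(M)\otimes\oo_S\xrightarrow{\,\mathrm{ev}\,}M\bigr)$, a vector bundle of rank $h^0(M)-1$ sitting in $0\to F_M\to H^0(M)\otimes\oo_S\to M\to 0$ (global generation of $M$ makes $\mathrm{ev}$ surjective and $F_M$ locally free). First I would fix the setup. Since $M$ is globally generated, $M\otimes\oo_C$ is base point free; the restriction sequence $0\to N^\vee\to M\to M\otimes\oo_C\to 0$, obtained from $M\otimes\oo_S(-C)\cong N^\vee$, together with $h^1(N)=0$ and $h^2(N)=h^0(N^\vee)=0$ (as $N$ is effective and nontrivial), shows $H^0(M)\xrightarrow{\sim}H^0(M\otimes\oo_C)$. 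Hence $E$ is built from the full space $H^0(M)$ and its dual is $E^\vee=\ker\bigl(H^0(M)\otimes\oo_S\to M\otimes\oo_C\bigr)$. By adjunction $\omega_D\cong M\otimes\oo_D$, so $E_{D,\omega_D}$ is the Lazarsfeld--Mukai bundle of the canonical series of $D$.

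Next I would produce the subsheaf $N$. Applying the kernel comparison to the composition $H^0(M)\otimes\oo_S\xrightarrow{\mathrm{ev}}M\to M\otimes\oo_C$, where the evaluation has kernel $F_M$ and the restriction $M\to M\otimes\oo_C$ has kernel $N^\vee$, yields $0\to F_M\to E^\vee\to N^\vee\to 0$. All three terms are locally free, so dualizing gives $0\to N\to E\to F_M^\vee\to 0$, which already has the shape of \eqref{res}. It then remains only to identify the quotient $F_M^\vee$ with $E_{D,\omega_D}$.

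The heart of the argument, and the step I expect to be the main obstacle, is the isomorphism $F_M^\vee\cong E_{D,\omega_D}$. I would prove it by analyzing the composition $c:H^0(M)\otimes\oo_S\xrightarrow{\mathrm{ev}}M\to M\otimes\oo_D=\omega_D$. Let $s_D\in H^0(M)$ cut out $D$, so that $\ker(M\to\omega_D)=\oo_S\cdot s_D$ and $H^0(M)\twoheadrightarrow H^0(\omega_D)$ has kernel $\langle s_D\rangle$ (here $h^1(\oo_S)=0$ forces the surjectivity). Setting $K:=\ker c$, the kernel comparison gives $0\to F_M\to K\xrightarrow{\mathrm{ev}}\oo_S\cdot s_D\to 0$, while the factorization of $c$ through the evaluation $H^0(\omega_D)\otimes\oo_S\to\omega_D$ gives $0\to\oo_S\cdot s_D\to K\to E_{D,\omega_D}^\vee\to 0$. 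The composite of the inclusion $\oo_S\cdot s_D\hookrightarrow K$ with the projection $K\to\oo_S\cdot s_D$ sends $s_D\otimes 1$ to $\mathrm{ev}(s_D\otimes 1)=s_D$, hence is an isomorphism; therefore $K\cong F_M\oplus(\oo_S\cdot s_D)$ and the two displayed sequences force $F_M\cong E_{D,\omega_D}^\vee$. Substituting into $0\to N\to E\to F_M^\vee\to 0$ produces \eqref{res}.

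The delicate points are concentrated in this last identification: one must check that the relevant kernels are genuinely locally free, so that dualization is a sequence of sub-bundles, and that the two trivial line (sub)bundles coming from $s_D$ cancel \emph{cleanly} rather than merely up to extension. The hypotheses are used precisely to make the comparison rigid: $h^1(N)=0$ and the effectivity of $N$ guarantee $H^0(M)\cong H^0(M\otimes\oo_C)$ so that $E$ uses all of $H^0(M)$; global generation of $M$ provides a smooth $D\in|M|$ and the needed surjectivities; and $h^1(M)=0$ pins the ranks down via $h^0(M)=2+\tfrac12 M^2=g(D)+1=\rk E_{D,\omega_D}+1$, consistent with $N$ entering as a line subbundle. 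I would finally observe that the resulting extension is independent of the chosen smooth $D\in|M|$, since $F_M$ depends only on $M$.
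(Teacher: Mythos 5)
Your proof is correct, and there is nothing in the paper to compare it against: the lemma is imported verbatim from \cite{LC3} (Lemma 4.1) and the paper gives no proof of it, so your kernel-bundle comparison has to be judged on its own terms --- where it stands up. All three kernel comparisons are sound: since $\mathrm{ev}\colon H^0(M)\otimes\oo_S\to M$ is surjective, pulling back the subsheaves $N^\vee=\ker(M\to M\otimes\oo_C)$ and $\oo_S\cdot s_D=\ker(M\to\omega_D)$ does give the exact sequences $0\to F_M\to E^\vee\to N^\vee\to 0$ and $0\to F_M\to K\to \oo_S\cdot s_D\to 0$, and dualizing the first is legitimate because all three terms are locally free, so the relevant $\ext^1$ vanishes and the dual sequence $0\to N\to E\to F_M^\vee\to 0$ is again exact. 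The ``clean cancellation'' you flagged as the delicate point does hold as you argue: the section $1\mapsto s_D\otimes 1$ splits the surjection $K\to\oo_S\cdot s_D$, and its image $\langle s_D\rangle\otimes\oo_S$ is \emph{literally the same subsheaf} of $K$ as the kernel of $K\to E_{D,\omega_D}^\vee$ coming from the factorization of $c$ through $H^0(\omega_D)\otimes\oo_S$ (both are the constant sections along $s_D$), so the quotient identification $F_M\cong K/(\langle s_D\rangle\otimes\oo_S)\cong E_{D,\omega_D}^\vee$ is an honest isomorphism and not merely one up to extension.

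Two minor remarks. First, the vanishing actually needed for surjectivity of $H^0(M)\to H^0(M\otimes\oo_C)$ is $h^1(N^\vee)=0$, which follows from $h^1(N)=0$ by Serre duality on the $K3$; your parenthetical $h^2(N)=h^0(N^\vee)=0$ is the dual statement handling injectivity, so the bookkeeping is right even if slightly misattributed. Second, the hypothesis $h^1(M)=0$ is never genuinely used in your argument: even the rank count follows from $0\to\oo_S\to M\to\omega_D\to 0$ together with $h^1(\oo_S)=0$, which gives $h^0(M)=g(D)+1$ directly (indeed, combined with Riemann--Roch and $h^2(M)=0$ this \emph{forces} $h^1(M)=0$, so the hypothesis is harmless redundancy in your setting). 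Your closing observation that the quotient $F_M^\vee\cong E_{D,\omega_D}$ is independent of the chosen smooth $D\in|M|$ is a pleasant byproduct, consistent with Lemma \ref{cano}(ii) of the paper.
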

We will also need the following:
\begin{rem}[\cite{LC3} Remark 6]\label{sei} Assume there exists a line bundle $N\in\Pic(S)$ such that $h^0(N)\geq 2$ together with an injective morphism $N\hookrightarrow E_{C,A}$ to some Lazarsfeld-Mukai bundle $E_{C,A}$. Then, the linear series $\vert A\vert$ is contained in $\vert (L\otimes N^\vee)\otimes\oo_C\vert$; this coincides with the restriction of $\vert L\otimes N^\vee\vert$ to $C$ if $h^1(N)=0$.
\end{rem}

Concerning the Lazarsfeld-Mukai bundle associated with the canonical line bundle, we state the following:
\begin{lem}\label{cano}
Let $E_{D,\omega_D}$ be the Lazarsfeld-Mukai bundle associated with the canonical line bundle on a smooth irreducible curve $D\subset S$. Then the following hold:
\begin{itemize}
\item[(i)] $E_{D,\omega_D}$ is simple;
\item[(ii)] $E_{D,\omega_D}$ does not depend on the choice of $D$ in its linear system.
\end{itemize}
\end{lem}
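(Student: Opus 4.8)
The plan is to deduce both statements from a single observation: up to duality, $E_{D,\omega_D}$ coincides with the syzygy bundle of $L:=\oo_S(D)$, which visibly does not involve $D$ itself. Set $M_L:=\ker\bigl(H^0(L)\otimes\oo_S\xrightarrow{\,\mathrm{ev}\,}L\bigr)$. Since $S$ is a $K3$ surface, adjunction gives $L|_D\cong\omega_D$ and $h^1(\oo_S)=0$, so restriction induces an exact sequence $0\to\oo_S\to L\to i_*\omega_D\to 0$ of sheaves on $S$, where $i\colon D\hookrightarrow S$. I would first prove item (ii) by establishing $E_{D,\omega_D}^\vee\cong M_L$.

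To this end, I would consider the composite surjection $H^0(L)\otimes\oo_S\xrightarrow{\mathrm{ev}}L\to i_*\omega_D$ and compute its kernel in two ways. Choosing a splitting $H^0(L)=\langle s_D\rangle\oplus W$, where $s_D$ is the section vanishing along $D$, the composite factors through the evaluation $W\otimes\oo_S\to i_*\omega_D$ of the canonical series (here one uses $h^1(\oo_S)=0$ to identify $W$ with $H^0(\omega_D)$ via restriction); hence its kernel equals $\langle s_D\rangle\otimes\oo_S\oplus E_{D,\omega_D}^\vee$. On the other hand, reading the same kernel through $\mathrm{ev}$ exhibits it as an extension $0\to M_L\to\langle s_D\rangle\otimes\oo_S\oplus E_{D,\omega_D}^\vee\to\oo_S\to 0$, the quotient being $\ker(L\to i_*\omega_D)=\oo_S$. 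The distinguished summand $\langle s_D\rangle\otimes\oo_S$ maps isomorphically onto this quotient, so the extension splits and $M_L\cong E_{D,\omega_D}^\vee$. As $M_L$ depends only on $L$, this proves (ii).

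For item (i), I would note that the Petri map of the canonical series is $\mu_{0,\omega_D}\colon H^0(\omega_D)\otimes H^0(\oo_D)\to H^0(\omega_D)$; since $D$ is irreducible, $H^0(\oo_D)=\mathbb C$ and $\mu_{0,\omega_D}$ is the canonical isomorphism, in particular injective. For $D$ general in $|L|$ the series $\omega_D=L|_D$ is base point free (because $L$ is globally generated on $S$), so Proposition \ref{pare} applies and yields that $E_{D,\omega_D}$ is simple. By item (ii) this is a single bundle $M_L^\vee$, independent of $D$, whence it is simple for every smooth $D\in|L|$.

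The main obstacle is the identification $E_{D,\omega_D}^\vee\cong M_L$ in (ii): one must correctly factor the composite evaluation through the canonical series and verify that the resulting extension of $\oo_S$ splits off precisely along the section $s_D$. Once this is in place, (i) is essentially formal given Proposition \ref{pare}. Alternatively, simplicity can be obtained directly and uniformly in $D$ by computing $\Hom(M_L,M_L)$ from the defining sequence of $M_L$, the only surviving contribution being the homothety line; this route avoids invoking generality of $D$ altogether.
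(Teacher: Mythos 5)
Your proposal is correct, but it proves the lemma by a genuinely different route from the paper, and in the opposite logical order. The paper proves (i) first, for \emph{every} smooth irreducible $D$ and with no genericity assumption: since the canonical Petri map is trivially injective, the identification $\ker\mu_{0,\omega_D}\simeq H^0(M_{\omega_D})$ together with sequence \eqref{ker} restricted to $D$ shows that $\Hom(E_{D,\omega_D},\oo_D)$ is spanned by the projection in \eqref{LM}, and since $\Hom(E_{D,\omega_D},\oo_S)=0$ one gets $\Hom(E,E)\hookrightarrow\Hom(E,\oo_D)$, hence simplicity; item (ii) is then deduced \emph{from} (i), via the rational map $h:G\left(g(D),H^0(E_{D,\omega_D})\right)\simeq\mathbb P^{g(D)}\dashrightarrow |D|\simeq\mathbb P^{g(D)}$, which is injective by simplicity, hence birational, with image the smooth irreducible curves. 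You instead prove (ii) first, by identifying $E_{D,\omega_D}^\vee$ with the syzygy bundle $M_L$ of $L=\oo_S(D)$; your splitting computation is correct (the summand $\langle s_D\rangle\otimes\oo_S$ maps isomorphically onto the quotient $\oo_S=\ker(L\to i_*\omega_D)$, so the kernel $M_L$ is the graph-isomorphic copy of $E_{D,\omega_D}^\vee$), and, crucially, it makes no use of simplicity, so there is no circularity when you then obtain (i) from Proposition \ref{pare} for one general $D\in|L|$ and transport it to all smooth members via (ii). What your route buys: (ii) comes with a canonical isomorphism valid for every smooth $D$ at once, and it sidesteps the somewhat delicate point, left implicit in the paper, that the image of the birational map $h$ is the \emph{whole} open locus of smooth irreducible curves rather than merely a dense constructible subset. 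Your suggested variant for (i) is also sound and arguably preferable: from $0\to M_L\to H^0(L)\otimes\oo_S\to L\to 0$ one has $H^0(M_L)=0$, so $\Hom(M_L,M_L)$ injects into $\Ext^1(L,M_L)\simeq H^1(M_L\otimes L^\vee)$, which is $1$-dimensional since $h^0(L^\vee)=h^1(L^\vee)=0$; this removes the genericity of $D$ from (i) altogether and makes the lemma independent of Proposition \ref{pare}, matching the unconditional form in which the paper states and later uses it (e.g.\ for $D=H$ in Proposition \ref{prop:iuppi}). Two hypotheses you use tacitly and should record: $L$ must be globally generated (so that $M_L$ is locally free and $W\simeq H^0(\omega_D)$ evaluates to the canonical series) and $h^1(L)=0$; both hold in the relevant setting, since on the surface of Theorem \ref{k3} every effective line bundle is ample and globally generated (Remark \ref{ample}), and in general by Saint-Donat whenever $D^2>0$.
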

\begin{proof}
Sequence \eqref{ker} along with  the obvious vanishing $0=\ker\mu_{0,\omega_D}\simeq H^0(M_{\omega_D})$ implies that $\Hom(E_{D,\omega_D}, \oo_D)=0$; hence, (i) follows from \eqref{LM}. Having fixed $D$, we consider the Grassmannian $$G\left(g(D), H^0(E_{D,\omega_D})\right)\simeq \mathbb P(H^0(E_{D,\omega_D})^\vee)\simeq \mathbb P^{g(D)}.$$ For a general $\Lambda\in G\left(g(D), H^0(E_{D,\omega_D})\right)$ the cokernel of the evaluation $\Lambda\otimes\oo_S\to E_{D,\omega_D}$ is isomorphic to $\oo_{D_1}$ for some smooth curve $D_1\in |D|$; hence, $E_{D,\omega_D}\simeq E_{D_1,\omega_{D_1}}$. The rational map $h:G\left(g(D), H^0(E_{D,\omega_D})\right)\dashrightarrow |D|\simeq \mathbb P^{g(D)}$ constructed in this way is injective since $E_{D,\omega_D}$ is simple. Hence, it is birational and its image coincides with the open subset of  $|D|$ parametrizing smooth and irreducible curves; this proves (ii).
\end{proof}

\begin{rem}\label{canonical}
By \cite{Mu}, the moduli space $Sp(c(E_{D,\omega_D}))$ of sheaves on $S$ with the same Chern classes as $E_{D,\omega_D}$ is smooth of dimension $0$; our remark is equivalent to the statement that $Sp(c(E_{D,\omega_D}))$ only  contains one Lazarsfeld-Mukai bundle, namely, $E_{D,\omega_D}$ itself.
\end{rem}

\section{The Gieseker-Petri locus in genus $18$}\label{quattro}
In this section we prove the following theorem that clearly implies Theorem \ref{explicit}.

\begin{thm}\label{k3}
There exists a smooth $K3$ surface $S\subset \mathbb{P}^3$ such that $\Pic(S)=\Z H\oplus\Z C$, where $H$ is a hyperplane section of $S$ and $C$ is a smooth curve of genus $18$ and degree $16$. 

If $C$ is general in its linear system, then all the Brill-Noether varieties $G^3_{17}(C)$, $G^2_d(C)$ for $14\leq d\leq 17$ and $G^1_k(C)$ for $10\leq k\leq 17$ are smooth of the expected dimension.
\end{thm}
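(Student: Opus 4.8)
The plan is to first realize $S$ by lattice theory and then to reduce the whole Brill--Noether statement to the injectivity of Petri maps, which I control through Lazarsfeld--Mukai bundles. Consider the rank $2$ lattice $M$ with basis $H,C$ and intersection numbers $H^2=4$, $H\cdot C=16$, $C^2=34$; it has discriminant $-120$ and signature $(1,1)$. I would embed $M$ primitively into the $K3$ lattice $U^3\oplus E_8(-1)^2$ (in rank $2$ Nikulin's criteria make this routine) and invoke the surjectivity of the period map to produce a $K3$ surface $S$ with $\Pic(S)\cong M$ and $H$ in the closure of the ample cone. The crucial numerical fact is that the form $4a^2+32ab+34b^2$ represents neither $-2$ nor $0$: setting $u=a+4b$ turns these into $2u^2-15b^2=-1$ and $2u^2=15b^2$, the first impossible modulo $5$ (where $u^2\equiv 2$ has no solution) and the second only for $b=u=0$ (as $30$ is not a square). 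Hence $S$ carries no $(-2)$-curve and no elliptic pencil, so $H$ and $C$ are ample; by \cite{SD}, $H$ is very ample and embeds $S$ as a smooth quartic, while the general member of $|C|$ is smooth irreducible of genus $18$ (since $C^2=34$) and degree $C\cdot H=16$. In particular its hyperplane series $H|_C$ is a complete $g^3_{16}$, so $[C]\in\M^3_{18,16}$.

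For the Brill--Noether part, recall that $G^r_d(C)$ is smooth of dimension $\rho(18,r,d)$ at $(A,V)$ exactly when $\mu_{0,V}$ is injective, and that all the quoted loci have $\rho\ge 0$ (while $\rho(18,3,16)=-2$ for the excluded one). I would first reduce to base point free series: if $B$ is the base locus of $(A,V)$ and $(A',V')=(A(-B),V)$ its reduction, then $H^0(\omega_C\otimes A^\vee)\subseteq H^0(\omega_C\otimes (A')^\vee)$ realizes $\mu_{0,V}$ as a restriction of $\mu_{0,V'}$, so injectivity for base point free series of every degree in the listed range forces it in general (the ranges are chosen so that reductions stay inside them, using that $C$ has no pencil of degree $<10$). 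For base point free $(A,V)$ and $C$ general, Proposition \ref{pare} then reduces everything to the single vanishing $h^0(E_{C,(A,V)}^\vee\otimes\omega_C\otimes A^\vee)=1$.

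For a complete base point free $g^r_d$ with $\rho\ge 0$, Proposition \ref{pare} says $\mu_{0,A}$ is injective iff $E_{C,A}$ is simple, and I would prove simplicity by exploiting $\Pic(S)=\Z H\oplus\Z C$: a non-scalar endomorphism yields a saturated sub-line bundle $N\hookrightarrow E_{C,A}$ or a torsion free quotient $E_{C,A}\to Q$, and by Remark \ref{sug} the class $N$, resp.\ $\det Q$, is globally generated, hence $=aH+bC$ with $N^2\ge 0$ and $N\cdot H>0$. Remark \ref{sei} and Lemma \ref{rest} then force $A$ to be contained in the restriction of the surface class $C-N$; combined with $c_1(E_{C,A})=C$, $c_2(E_{C,A})=d$ and $h^0(A)=r+1$, the lattice leaves only finitely many candidates, all of which I expect to be numerically incompatible with the pairs $(r,d)$ in the statement. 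This also settles every non-complete series contained in a complete one with $\rho\ge 0$, since $\mu_{0,V}$ is then a restriction of the injective $\mu_{0,A}$; by the same analysis the only complete linear series of negative Brill--Noether number carried by a general $C$ is the hyperplane series $H|_C$ itself, which I treat next.

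The genuinely hard case, and the one dictating the choice $g=18$, is the non-complete $g^2_{16}$. These exist precisely because $C$ carries the complete $g^3_{16}=H|_C$, whose Petri map is \emph{not} injective (this is why $[C]\in GP_{18}$): every $3$-dimensional subspace $V\subset H^0(H|_C)$ is a non-complete $g^2_{16}$, and its bundle $E_{C,(A,V)}$ has $h^1=1$, so simplicity no longer governs $\ker\mu_{0,V}$. Equivalently, I must show that for general $C$ the (at least $2$-dimensional) space $\ker\mu_{0,H|_C}$ contains no tensor of rank $\le 3$, a rank drop producing exactly such a $V$ with non-injective Petri map; this is the phenomenon that killed $\M^3_{14,13}$ but is only borderline here because $\rho(18,3,16)=-2=d-g$ makes Proposition \ref{noncomp} inapplicable. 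To carry this out I would combine the universal extension \eqref{univ}, which reads $0\to\oo_S\to E_{C,H|_C}\to E_{C,(A,V)}\to 0$, with the restriction structure of Lemma \ref{rest}, namely $0\to(C-H)\to E_{C,H|_C}\to E_{D,\omega_D}\to 0$ for a smooth $D\in|H|$ of genus $3$, and compute $h^0(E_{C,(A,V)}^\vee\otimes\oo_C(C-H))$ directly as $V$ ranges over $\mathbb P(H^0(H|_C))$. This delicate computation is the heart of the argument and the main obstacle; once it yields the value $1$ for general $C$, the remaining non-complete pencils coming from $2$-dimensional subspaces of $H^0(H|_C)$ follow by a further restriction, and Theorem \ref{k3} is proved.
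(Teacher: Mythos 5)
Your lattice-theoretic construction of $S$ (Nikulin embedding plus surjectivity of the period map, with the correct verification that $2a^2+16ab+17b^2$ represents neither $0$ nor $-1$) is a sound alternative to the paper's appeal to Mori's theorem, and your reductions -- to base point free series, to complete series via restriction of Petri maps, and to $3$-dimensional $V\subset H^0(\oo_C(H))$ in the non-complete case -- match the paper's. However, the two substantive steps both have genuine gaps. For complete series, your plan is to list the lattice classes of destabilizing sub- and quotient sheaves of $E_{C,A}$ and find them ``numerically incompatible with $(r,d)$''. This expectation is false: the numerics do \emph{not} rule everything out. In rank $2$, the destabilization analysis leads to $E_{C,A}\simeq\oo_S(H)\oplus\oo_S(C-H)$, a bundle with $c_1=C$ and $c_2=H\cdot(C-H)=12$, i.e.\ numerically the Lazarsfeld--Mukai bundle of a complete base point free $g^1_{12}$, squarely inside your range $10\le k\le 17$; similarly in rank $3$ the split bundles $E_1\oplus\oo_S(C-H)$ with $E_1$ stable of $c_1(E_1)=H$ are numerically consistent for $d\le 17$. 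What excludes these is not the lattice but a parameter count: one must show that the rational maps $h_E:G(2,H^0(E))\dashrightarrow\G^1_k(|C|)$ (and their rank-$3$ analogues, built from Grassmann bundles over the relevant moduli spaces of sheaves) have image of dimension $<g=18$, hence cannot dominate $|C|$, so that a \emph{general} $C$ carries no series with split Lazarsfeld--Mukai bundle. This generality-in-$|C|$ argument is an essential ingredient your proposal omits, and without it the complete case does not close.

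For the non-complete $g^2_{16}$, you correctly identify the crux (and correctly observe why Proposition \ref{noncomp} is just barely inapplicable at $\rho=d-g=-2$), but you then defer the entire proof: announcing that the ``delicate computation is the heart of the argument and the main obstacle'' states the problem rather than solving it. The missing content consists of two nontrivial ideas. First, one shows $E_V$ is simple for general $C$: the only alternative is $E_V\simeq E_W\oplus\oo_S(C-H)$, where $E_W$ is the Lazarsfeld--Mukai bundle of a non-complete canonical series on a hyperplane section, and this splitting is excluded by bounding $\dim\mathrm{Quot}_S(E_{H,\omega_H},P)\le h^0(E_W)=3$ (using the rigidity of $E_{H,\omega_H}$, Remark \ref{canonical}) and running another Grassmann-bundle count into $|C|$. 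Second, simplicity of $E_V$ must still be converted into $\dim\Hom(E_V,\oo_C(C-H))=1$; this is not automatic, precisely because $h^1(E_V)=1$ disables the second half of Proposition \ref{pare}. It requires identifying $\Ext^1(E_V,V^\vee\otimes\oo_S)\simeq H^1(E_V)^\vee\otimes V^\vee$ and a diagram chase showing that any surjection $f':E_V\to\oo_C(C-H)$ with nonzero coboundary would split off $\oo_S$ as a direct summand of $E_V$, contradicting $h^2(E_V)=0$. Neither step is a ``direct computation of $h^0$ as $V$ ranges over $\mathbb{P}(H^0(\oo_C(H))^\vee)$''; without them your argument is incomplete exactly at the point that makes the theorem hard.
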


We recall Mori's Theorem (cf. \cite{mori}) stating that, if $e>0$ and $g\geq 0$ there is a smooth genus $g$ curve $C$ lying on a smooth quartic surface $S\subset \mathbb P^3$ such that $\deg (C)=e$ if and only if  either $g=e^2/8+1$, or $g<e^2/8$ and $(e,g)\neq (5,3)$. This ensures the existence of a $K3$ surface $S$ as in the statement since $g=18<(\deg (C))^2/8=(H\cdot C)^2/8=32$. Using the intersection numbers $H^2=4$, $H\cdot C=16$ and $C^2=2g-2=34$, one can easily verify that $S$ contains neither curves of genus $1$ nor ($-2$)-curves, or equivalently (cf. \cite{gabi1}), that $0$ and $-1$ are not represented by the quadratic form
\begin{equation}\label{diofantea}
Q(a,b):=2a^2+16ab+17b^2.
\end{equation}
\begin{rem}\label{ample}In particular, any effective line bundle $L$ on our $K3$ surface $S$ satisfies $c_1(L)^2>0$ and is globally gene\-rated by \cite{SD}. Something even stronger holds, namely, $L$ is automatically ample (cf., e.g., \cite[Corollary 8.1.6]{Hu}).\end{rem}

From now on, we assume $C$ to be general in its linear system , so that we can apply Proposition \ref{pare} in order to translate the injectivity  of Petri maps on $C$ in terms of Lazarsfeld-Mukai bundles on $S$. We will study the simplicity of such bundles by analyzing their slope-stability with respect to the line bundle $\oo_S(C-H)$.  

\begin{lem}\label{ch}
Let $S$ be a $K3$ surface as in Theorem \ref{k3}.
Then, the line bundle $\oo_S(C-H)$ is ample and the following hold:
\begin{itemize}
\item[(i)] the slope of any line bundle on $S$ with respect to $C-H$ is divisible by $6$;
\item[(ii)] if an effective line bundle $L\in\Pic(S)$ satisfies $\mu_{C-H}(L)=6$, then $c_1(L)=C-H$;
\item[(iii)] if an effective line bundle$L\in\Pic(S)$ satisfies $\mu_{C-H}(L)=12$, then \\$c_1(L)\in \left\{H,2(C-H), 4C-5H\right\}$.
\end{itemize}
\end{lem}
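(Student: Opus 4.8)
The plan is to set up the intersection form on $\Pic(S)=\Z H\oplus \Z C$ explicitly and reduce every assertion to an elementary arithmetic statement about the lattice. Writing a divisor class as $L=\alpha H+\beta C$ with $\alpha,\beta\in\Z$, I would first record the intersection numbers $H^2=4$, $H\cdot C=16$, $C^2=34$, and compute $(C-H)^2=C^2-2H\cdot C+H^2=34-32+4=6$, confirming in passing that $(C-H)^2>0$; together with $(C-H)\cdot H=H\cdot C-H^2=12>0$ (so $C-H$ meets the ample $H$ positively) and the fact that $S$ contains no $(-2)$-curves, this shows $C-H$ is ample, as is asserted at the start of the lemma. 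The key computational input for the whole lemma is the single linear functional $L\mapsto (C-H)\cdot L$, which I would evaluate as
\begin{equation*}
(C-H)\cdot(\alpha H+\beta C)=\alpha\,(C-H)\cdot H+\beta\,(C-H)\cdot C=12\alpha+\beta\,(C^2-H\cdot C)=12\alpha+18\beta .
\end{equation*}

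From this formula item (i) is immediate: the slope of $L$ with respect to $C-H$ is $\mu_{C-H}(L)=(C-H)\cdot c_1(L)/\rk L$, and for a \emph{line bundle} $\rk L=1$, so $\mu_{C-H}(L)=12\alpha+18\beta=6(2\alpha+3\beta)$ is divisible by $6$. For items (ii) and (iii) the strategy is to combine the slope equation with the constraint that $L$ be effective, which I would control via the Hodge-index/genus inequality on this rank-$2$ lattice. Concretely, for (ii) the condition $\mu_{C-H}(L)=6$ reads $2\alpha+3\beta=1$, whose integer solutions form the line $(\alpha,\beta)=(-1-3t,\,1+2t)$ for $t\in\Z$; I would then impose effectivity and rule out all solutions except $(\alpha,\beta)=(-1,1)$, i.e.\ $c_1(L)=C-H$. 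For (iii) the equation $2\alpha+3\beta=2$ has solutions $(\alpha,\beta)=(1-3t,\,2t)$, and the three listed classes $H,\,2(C-H),\,4C-5H$ correspond to $t=0,-1,-2$ respectively (one checks $4C-5H=(−5)H+4C$ gives $2(-5)+3\cdot4=2$, and $2(C-H)=-2H+2C$ gives $-4+6=2$); again the task is to show these are the only effective classes on that line.

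The main obstacle, and the part where the real content lies, is the effectivity analysis that prunes the infinite integer families down to the finite lists. I expect to handle this using the quadratic form $Q(a,b)=2a^2+16ab+17b^2$ from \eqref{diofantea}, which computes $\tfrac12 D^2$ (equivalently $p_a(D)-1$) for $D=aH+bC$ and encodes that neither $0$ nor $-1$ is represented, so every effective class has $D^2\ge 2$ and there are no $(-2)$- or genus-$1$ curves to create exceptional effective classes. Along each solution line $L$ is a one-parameter family, and substituting the parametrization into $Q$ yields a quadratic in $t$; effectivity forces $L^2\ge 0$ (indeed $L^2\ge -2$, but here $\ge 0$), and crucially I would pair this with the sign of $L\cdot H$ (which must be positive for an effective nonzero class, since $H$ is ample) to cut the family down. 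The delicate point is that $L^2\ge 0$ alone leaves finitely many but possibly more than the listed candidates, so one must also invoke that $L$ \emph{and} its candidate complement are simultaneously consistent with effectivity and with $Q$ never taking the values $0,-1$; I would verify by direct substitution that outside the listed values of $t$ either $Q$ forces a forbidden genus or $L\cdot H<0$, completing the enumeration.
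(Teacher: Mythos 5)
Your proposal is correct, and on the ampleness claim, item (i), and item (iii) it coincides with the paper's own argument: the paper also writes $c_1(L)=aH+bC$, uses $(C-H)\cdot H=12$ and $(C-H)\cdot C=18$, and for (iii) parametrizes the integral solutions of the slope equation (in its convention $a=1+3k$, $b=-2k$) and prunes them using the fact that every effective class on this surface has positive square. The only genuine divergence is item (ii): there the paper does not parametrize, but instead applies the Hodge Index Theorem --- from $(C-H)^2=6$ and $c_1(L)\cdot(C-H)=6$ it deduces $c_1(L)^2\in\{2,4\}$ once $c_1(L)\neq C-H$, then excludes $c_1(L)^2=2$ because $Q$ does not represent $1$, and excludes $c_1(L)^2=4$ because the system $2a^2+16ab+17b^2=2$, $12a+18b=6$ has no integral solutions. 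Your uniform parametrization is arguably cleaner and buys a one-line finish that the paper's method replaces with a diophantine check: substituting $(\alpha,\beta)=(-1-3t,1+2t)$ into $Q$ gives $Q=3-10t^2$, so positivity of $c_1(L)^2=2Q$ forces $t=0$, i.e.\ $c_1(L)=C-H$; likewise $Q(1-3t,2t)=2+20t-10t^2$ is positive only for $t\in\{0,1,2\}$, which give precisely $H$, $2(C-H)$ and $4C-5H$. In particular, the ``delicate point'' you flag at the end is not delicate at all: the single inequality $c_1(L)^2>0$ --- valid for every effective class because $Q$ represents neither $0$ nor $-1$, so $S$ carries no curves of arithmetic genus $\leq 1$ (this is exactly Remark \ref{ample}) --- already cuts each family down to the stated lists, and no appeal to $L\cdot H>0$ or to any ``candidate complement'' is needed. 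Two small slips to fix: with your parametrization of (iii), the classes $H$, $2(C-H)$, $4C-5H$ correspond to $t=0,1,2$, not $t=0,-1,-2$; and the effectivity condition to invoke is $c_1(L)^2>0$ (hence $\geq 2$ by evenness of the lattice), rather than the weaker $c_1(L)^2\geq 0$ you state, although the weaker bound happens to suffice for both enumerations.
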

\begin{proof}
Since $(C-H)^2>0$ and $H\cdot(C-H)>0$, then $\oo_S(C-H)$ is effective and thus automatically ample by Remark. Item (i) follows trivially from the intersection numbers $(C-H)\cdot C=18$ and $(C-H)\cdot H=12$. Now let $L$ be an effective line bundle (hence, $c_1(L)^2>0$) and write  $c_1(L)=aH+bC$ for some integers $a$ and $b$. First assume that $c_1(L)\cdot (C-H)=6$ and $c_1(L)\neq C-H$. Since $(C-H)^2=6$, the Hodge Index Theorem yields either $c_1(L)^2=2$ or $c_1(L)^2=4$. The former case does no occur since $1$ is not represented by the quadratic form \eqref{diofantea}; the latter case can also be excluded since the system of diophantine equations $2a^2+17b^2+16ab-2=12a+18b-6=0$ has no integral solutions. This proves (ii). 

We now assume $c_1(L)\cdot (C-H)=12$ as in (iii), or equivalently, $a=1+3k$, $b=-2k$ with $k\in\mathbb Z$. This contradicts the inequality $c_1(L)^2>0$ unless $k\in \{-2,-1,0\}$, thus proving (iii).
\end{proof}

We recall that any slope-stable (with respect to any polarization) coherent sheaf $E$ on $S$ moves in a smooth moduli space of dimension
\begin{equation}\label{dim}
(1-\rk E)c_1(E)^2+2\rk E c_2(E)-2(\rk E)^2+2,
\end{equation}
cf. \cite{Mu}; the Chern classes of $E$ thus satisfy the inequality 
\begin{equation}\label{Bog+}
c_2(E)\geq -\frac{1}{\rk E}+\rk E+\frac{\rk E-1}{2\rk E}c_1(E)^2,
\end{equation}
that is slightly stronger than Bogomolov's inequality.

First of all, we study complete pencils on a curve $C\subset S$ as in Theorem \ref{k3}.
\begin{prop}\label{pencils}
Let $S\subset \mathbb{P}^3$ be a $K3$ surface as in Theorem \ref{k3}. If $C$ is general in its linear system, then $C$ has maximal gonality $10$ and, for $10\leq k\leq 17$, the Brill-Noether variety $G^1_k(C)$ is smooth at all points corresponding to complete pencils.
\end{prop}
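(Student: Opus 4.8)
The plan is to translate smoothness of $G^1_k(C)$ at a complete pencil into simplicity of the associated rank $2$ Lazarsfeld--Mukai bundle, and then to obtain simplicity by proving slope-stability with respect to $\oo_S(C-H)$. First I would reduce to base point free pencils: if $A=A_0(B)$ with $B$ its base locus, then $\mu_{0,A}$ is the restriction of $\mu_{0,A_0}$ to $H^0(A_0)\otimes H^0(\omega_C\otimes A_0^\vee(-B))$, so $\ker\mu_{0,A}\subseteq\ker\mu_{0,A_0}$ and it suffices to treat base point free complete pencils of degree $10\le k\le 17$. For such an $A$, Proposition \ref{pare} identifies the injectivity of $\mu_{0,A}$ with the simplicity of $E:=E_{C,A}$, a bundle of rank $2$ with $c_1(E)=C$ and $c_2(E)=k$; since slope-stable bundles are simple, the whole statement follows once I control the stability of $E$ with respect to $\oo_S(C-H)$.

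The gonality claim splits in two. The bound $\mathrm{gon}(C)\le 10$ is automatic from $\rho(18,1,10)=0$. For the reverse inequality and the stability analysis, note that $\mu_{C-H}(E)=\tfrac12\,C\cdot(C-H)=9$; by Lemma \ref{ch}(i) the slope of any sub-line-bundle is divisible by $6$, so a destabilizing $N\hookrightarrow E$ must satisfy $\mu_{C-H}(N)\ge 12$. Writing the quotient as $M\otimes I_\xi$, Remark \ref{sug} forces $M$ to be globally generated, hence effective with $\mu_{C-H}(M)=18-\mu_{C-H}(N)\le 6$; as $C-H$ is ample this gives $\mu_{C-H}(M)=6$, and Lemma \ref{ch}(ii) yields $c_1(M)=C-H$, whence $c_1(N)=H$ and $c_2(E)=H\cdot(C-H)+\mathrm{len}(\xi)=12+\mathrm{len}(\xi)$. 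Thus $E$ can fail to be stable only when $k\ge 12$. For $k\le 11$ the bundle $E$ is therefore stable, hence simple, which settles smoothness in this range; moreover stability combined with the Bogomolov-type inequality \eqref{Bog+} forces $k=c_2(E)\ge 10$, so no base point free pencil of degree $\le 9$ exists and $\mathrm{gon}(C)=10$.

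The remaining and most delicate range is $12\le k\le 17$, where the sub-line-bundle $N=\oo_S(H)$ can a priori destabilize $E$. Here I would exploit Remark \ref{sei}: since $h^0(\oo_S(H))=4$, an inclusion $\oo_S(H)\hookrightarrow E$ forces $|A|$ to be contained in the restriction to $C$ of $|C-H|$, which is a complete $g^4_{18}$. Imposing the base divisor $\Delta$ of degree $18-k$ cannot drop $h^0$ below $k-13$, so the equality $h^0(A)=2$ already excludes $k=16,17$ and makes $E$ stable there. For $12\le k\le 15$ the destabilizing inclusion may genuinely occur, and the argument I would run is as follows: given the extension $0\to\oo_S(H)\to E\to\oo_S(C-H)\otimes I_\xi\to0$, one checks that $\Hom(\oo_S(H),\oo_S(C-H)\otimes I_\xi)$ and $\Hom(\oo_S(C-H)\otimes I_\xi,\oo_S(H))$ both vanish, as the classes $C-2H$ and $2H-C$ are not effective; together with $\mathrm{End}$ of each factor being $1$-dimensional, this shows that any non-split extension is simple.

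The hard part, which I expect to be the main obstacle, is to exclude the split case $E\cong\oo_S(H)\oplus(\oo_S(C-H)\otimes I_\xi)$: such a decomposable bundle is never simple, so it is precisely where $G^1_k(C)$ would fail to be smooth, and it must be shown not to arise at a complete pencil on a general $C$. I would treat this by a parameter count for the forgetful map $\pi:\G^1_k(|C|)\to|C|$ in the spirit of the proof of Proposition \ref{pare}, showing that the pairs $(C',A')$ whose Lazarsfeld--Mukai bundle decomposes sweep out only a proper closed subset of $|C|$, so that a general curve $C$ avoids them and every complete pencil on it corresponds to a simple bundle.
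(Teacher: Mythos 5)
Your proposal is correct and, in its core, follows the same route as the paper: reduce to base point free complete pencils, translate smoothness into simplicity of the rank $2$ Lazarsfeld--Mukai bundle via Proposition \ref{pare}, run the slope analysis with respect to $\oo_S(C-H)$ using Lemma \ref{ch} and Remark \ref{sug} to pin down any destabilizing sequence as $0\to\oo_S(H)\to E\to\oo_S(C-H)\otimes I_\xi\to 0$, note that both $\Hom$ groups between the two factors vanish because $(C-2H)^2=-14<0$, and finally exclude the split bundle on a general curve by a parameter count. There are, however, two genuine points of difference. First, the gonality: the paper simply quotes Farkas (\cite{gabi1}, Theorem 3), whereas you derive $\mathrm{gon}(C)\geq 10$ from your own stability analysis (the slope computation shows $E$ is $\mu_{C-H}$-stable whenever $c_2(E)\leq 11$) combined with \eqref{Bog+}, which forces $c_2(E)\geq 10$ for a $\mu_{C-H}$-stable rank $2$ bundle with $c_1(E)=C$; this is a self-contained alternative that moreover works for \emph{every} smooth curve in $|C|$, not just the general one. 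Second, the case organization: the paper treats all $10\leq k\leq 17$ uniformly --- non-simplicity plus the two $\Hom$ vanishings force the sequence to split, so $E\simeq \oo_S(H)\oplus\oo_S(C-H)$, and a single dimension count finishes every $k$ at once --- while you stratify into $k\leq 11$ (stable), $k=16,17$ (stable, via Remark \ref{sei} and the fact that a complete pencil cannot sit inside the $g^4_{18}$ cut out by $|C-H|$ after subtracting only $18-k\leq 2$ points; correct, but redundant given the uniform argument), and $12\leq k\leq 15$. Your last step is only sketched, but it is exactly the paper's count and it does close: local freeness of $E$ forces $\xi=\emptyset$ in the split case, so there is a single bundle $E=\oo_S(H)\oplus\oo_S(C-H)$ to consider, with $h^0(E)=9$ and $h^0(E\otimes E^\vee)=2$; the pairs $(C',A')$ it produces are swept out by $G(2,H^0(E))$ modulo the $1$-dimensional fibers $\mathbb{P}H^0(E\otimes E^\vee)$, giving an image of dimension $2\cdot 7-1=13<18=\dim|C|$, so a general $C$ avoids it.
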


\begin{proof}
By Theorem 3 in \cite{gabi1}, $C$ has maximal gonality $10$. Let $A$ be a complete $g^1_k$ on $C$ with $10\leq k\leq 17$. By induction on $k$, we may assume $A$ is base point free. By contradiction, we suppose that the rank $2$ Lazarsfeld-Mukai bundle $E=E_{C,A}$ is non-simple. Hence, it cannot be $\mu_{C-H}$-stable and there is a destabilizing short exact sequence:
\begin{equation}\label{seminario}
0\to M\to E\to N\otimes I_{\xi}\to 0,
\end{equation}
where $N, M\in\Pic(S)$ satisfy 
\begin{equation}\label{pescara}
\mu_{C-H}(M)\geq\mu_{C-H}(E)=9\geq\mu_{C-H}(N)>0,
\end{equation} 
with the last inequality following from the fact that $N$ is globally generated and non-trivial by Remark \ref{sug}. By Lemma \ref{ch} (i)-(ii), the only possibility is $c_1(N)=C-H$ and $c_1(M)=H$. Since $(C-2H)^2<0$, then both $\mathrm{Hom}(M,N\otimes I_\xi)=0$ and $\mathrm{Hom}(N\otimes I_\xi,M)=0$. The non-simplicity of $E$ thus yields $E\simeq \oo_S(H)\oplus\oo_S(C-H)$. 

We consider the rational map $h_E:G(2,H^0(E))\dashrightarrow \G^1_k(\vert C\vert)$ mapping a general $2$ dimensional subspace $\Lambda\subset H^0(E)$ to the pair $(C_\Lambda,A_\Lambda)$, where $C_\Lambda$ is the degeneracy locus of the (injective) evaluation map $ev_\Lambda:\Lambda\otimes\oo_S\to E$ and $\omega_{C_\Lambda}\otimes A_\Lambda^\vee$ is the cokernel of $ev_\Lambda$. The fiber of $h_E$ over $(C,A)\in \im\,h_E\subset\G^1_k(\vert C\vert)$ is isomorphic to $$\mathbb{P}\Hom(E,\omega_C\otimes A^\vee)\simeq \mathbb{P}H^0(S,E\otimes E^\vee),$$ which is $1$-dimensional. It follows that 
$$
\dim\im\,h_{E}= 2(h^0(E)-2)-1=2(g-k+1)-1<g,$$ as $k\geq 10>(g+1)/2$; in particular, the image of $h_E$ does not dominate the linear system $|C|$. This implies that, if $C$ is general in its linear system and $10\leq k\leq 17$, the Lazarsfeld-Mukai bundle associated with {\em any} complete, base point free $g^1_k$ on $C$ is simple, and thus the statement follows from Proposition \ref{pare}.
\end{proof}
  
 We now treat complete linear series of type $g^2_d$.

\begin{prop}\label{nets}
Let $S\subset \mathbb{P}^3$ be a $K3$ surface as in Theorem \ref{k3}. If $C$ is general in its linear system, then $C$ has no linear series of type $g^2_{13}$. Furthermore, for $14\leq d\leq 17$ the Brill-Noether variety $G^2_d(C)$ is smooth at all points parametrizing complete nets .
\end{prop}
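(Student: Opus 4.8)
The plan is to mirror the strategy of Proposition~\ref{pencils}, now working with rank~$3$ Lazarsfeld-Mukai bundles. First I would show that $C$ carries no $g^2_{13}$: if it did, the associated bundle $E=E_{C,A}$ (possibly after stripping base points) would have $\rk E=3$, $c_1(E)=C$, $c_2(E)=13$, and these Chern numbers violate inequality \eqref{Bog+}, since the right-hand side forces $c_2(E)$ to be too large relative to $c_1(E)^2=C^2=34$. This rules out complete nets of degree $13$ outright and simultaneously constrains which degenerations can occur for larger $d$.

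For $14\le d\le 17$, by Proposition~\ref{pare} smoothness of $G^2_d(C)$ at a complete net $(A,H^0(A))$ is equivalent to simplicity of $E_{C,A}$, so I would argue that $E:=E_{C,A}$ is $\mu_{C-H}$-stable for $C$ general. Suppose not; then there is a destabilizing subsheaf, and I would analyze the possible shapes of a maximal destabilizing sequence. Because $\rk E=3$, the destabilizing quotient can have rank $1$ or rank $2$; I would use Lemma~\ref{ch}(i)--(iii) to pin down the admissible first Chern classes of the sub- and quotient sheaves, using that $\mu_{C-H}(E)=\tfrac{1}{3}(C\cdot(C-H))=6$ and that every slope is divisible by $6$. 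The globally-generated constraints from Remark~\ref{sug} (now in the torsion-free form: a quotient of $E$ has $h^0(\det)\ge 2$ and its determinant is globally generated since $S$ has no $(-2)$-curves) should force the destabilizing pieces into a short list of line bundles from Lemma~\ref{ch}, so that $E$ splits as a direct sum of the corresponding line bundles (or an extension built from them).

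Having identified the finitely many candidate non-simple bundles $E$ that could destabilize, I would run the dimension-counting argument exactly as in Proposition~\ref{pencils}: for each such candidate I form the rational map $h_E:G(3,H^0(E))\dashrightarrow \G^2_d(|C|)$ sending a general $3$-dimensional $\Lambda\subset H^0(E)$ to the pair $(C_\Lambda,A_\Lambda)$ cut out by the degeneracy locus of $\mathrm{ev}_\Lambda$, compute the fiber dimension as $\dim\mathbb P\Hom(E,\omega_C\otimes A^\vee)=\dim\mathbb P H^0(E\otimes E^\vee)-1$, and check that $\dim\im h_E<g=18$. This shows that for $C$ general no such non-simple $E$ arises from a net on $C$, so every complete net has simple Lazarsfeld-Mukai bundle and $G^2_d(C)$ is smooth there.

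The main obstacle is the classification step for the rank~$3$ bundles. Unlike the rank~$2$ case, a destabilizing sequence need not immediately split, and the quotient may be a rank~$2$ torsion-free sheaf rather than a twisted ideal sheaf; I expect the delicate part to be combining the numerical restrictions of Lemma~\ref{ch}, the Hodge Index and Bogomolov-type inequality \eqref{Bog+}, and the global generation of determinants to reduce to a genuinely finite list of split bundles whose $h^0(E\otimes E^\vee)$ can be computed. Controlling the endomorphism dimension $h^0(E\otimes E^\vee)$ for each candidate—so that the fiber of $h_E$ is small enough to keep $\dim\im h_E$ below $g$—is where the argument must be carried out case by case.
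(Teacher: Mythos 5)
Your overall strategy for $14\le d\le 17$ --- destabilization with respect to $\oo_S(C-H)$, the numerical constraints of Lemma \ref{ch} and Remark \ref{sug}, then a parameter count --- is indeed the paper's, but two of your specific claims fail. First, the shortcut for $g^2_{13}$ is invalid: inequality \eqref{Bog+} holds only for slope-\emph{stable} sheaves (it expresses non-negativity of the moduli dimension \eqref{dim}), so from $c_2=13<14$ you may only conclude that $E_{C,A}$ is not $\mu_{C-H}$-stable, not that it does not exist; for instance $\oo_S(H)\oplus\oo_S(C-H)\oplus\oo_S$ is a rank $3$ bundle with $c_1=C$ and $c_2=12$. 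The paper instead treats $d=13$ inside the same case analysis as $14\le d\le 17$: since $\rho(18,2,13)=-3<0$, any complete $g^2_{13}$ automatically has non-injective Petri map, hence non-simple Lazarsfeld--Mukai bundle by Proposition \ref{pare}, and it is the destabilization-plus-parameter-count argument (not a Chern class inequality) that shows such bundles cannot occur for $C$ general.

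Second, your expectation of ``a genuinely finite list of split bundles'' is wrong, and this breaks the parameter count as you set it up. In the case where the maximal destabilizing subsheaf is a $\mu_{C-H}$-stable rank $2$ bundle $E_1$, Lemma \ref{ch} forces $c_1(E_1)=H$ and the quotient to be $\oo_S(C-H)\otimes I_\xi$, and Hom-vanishing forces $E\simeq E_1\oplus\oo_S(C-H)$; but $E_1$ moves in a moduli space of dimension $4d-58$, which is positive for $d\ge 15$. Counting $\dim G(3,H^0(E))$ for each fixed $E$ therefore does not exclude the union of images over this positive-dimensional family: one must fibre the Grassmannian over the moduli space (the paper's Grassmann bundle $\mathcal G_1\to\mathcal M_1^\circ$), getting image dimension at most $(4d-58)+(60-3d)-1=d+1$. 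Even this is not $<g=18$ when $d=17$; the paper closes the count only via the extra observation (Remark \ref{sei}) that in the split case $|A|$ lies in the restriction of $|H|$ to $C$, so that $d\le H\cdot C=16$. Without this step your count fails precisely at $d=17$. Finally, the classification you defer as the ``main obstacle'' (in particular the rank-$1$-subsheaf case, where one must prove stability of the rank $2$ quotient $E/M$ through $c_2$ estimates before the sequence can be shown to split) is the bulk of the paper's proof, so as it stands the proposal is a correct plan whose decisive steps are either missing or, in the two instances above, incorrect.
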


\begin{proof}
Let $C$ be general in its linear system and $A\in \Pic^d(C)$ be a complete $g^2_d$ on $C$ with $d\leq 17$; by induction on $d$, we may assume it to be base point free. By contradiction, suppose that the rank $3$ Lazarsfeld-Mukai bundle $E=E_{C,A}$ is non-simple, and hence not $\mu_{C-H}$-stable. We separately analyze two cases.\\\vspace{0.1cm}

\noindent{\em CASE A: The maximal destabilizing subsheaf of $E$ is a $\mu_{C-H}$-stable rank $2$ vector bundle $E_1$.}

We consider the short exact sequence
\begin{equation}\label{2A}
0\to E_1\to E\to N\otimes I_\xi\to 0,
\end{equation} 
where $\xi\subset S$ is a $0$-dimensional subscheme, $N\in \Pic(S)$ is globally generated and non-trivial by Remark \ref{sug}, and the following inequalities are satisfied: $$\mu_{C-H}(E_1)\geq\mu_{C-H}(E)=6\geq \mu_{C-H}(N)>0.$$ Lemma \ref{ch} (i)-(ii)  yields $c_1(N)=C-H$ and $c_1(E_1)=H$. Since $\mu_{C-H}(E_1)=\mu_{C-H}(N)$ and $E_1$ is stable, then $\mathrm{Hom}(E_1,N\otimes I_\xi)=\mathrm{Hom}(N\otimes I_\xi,E_1)=0$ (cf. \cite{Fr}). As $E$ is non-simple, then $\xi=\emptyset$ and \eqref{2A} splits, that is, $E\simeq E_1\oplus \oo_S(C-H)$. By Remark \ref{sei}, the linear series $|A|$ is then contained in the restriction of $|H|$ to $C$ and thus $d\leq H\cdot C=16$.

We perform a parameter count like in \cite{LC2} contradicting the generality of $C$. The stable sheaf $E_1$ moves in a moduli space $\mathcal M_1$ of dimension $4d-58$, cf. \eqref{dim}. Let $\mathcal M_1^\circ$ denote the oben subset of $\mathcal M_1$ parametrizing generically generated vector bundles with vanishing $H^1$ and $H^2$, and let $p:\mathcal G_1\to \mathcal M_1^\circ$ be the Grassmann bundle  whose fiber over a point $[E_1]\in  \mathcal M_1^\circ$ is $G(3, H^0(E_1\oplus \oo_S(C-H)))$. We define a rational map $h_1:\mathcal G_1\dashrightarrow \mathcal G^2_d(|C|)$ mapping a general point $(E_1\oplus \oo_S(C-H),\Lambda)\in\G_1$ to the pair $(C_\Lambda,A_\Lambda)$, where $C_\Lambda$ is the degeneracy locus of the evaluation map $$ev_\Lambda:\Lambda\otimes\oo_S\to E_1\oplus \oo_S(C-H),$$ which is injective for a general $\Lambda\in G(3,H^0(S,E_1\oplus \oo_S(C-H)))$, and $\omega_{C_\Lambda}\otimes A_\Lambda^\vee$ is the cokernel of $ev_\Lambda$. A general fiber of $p$ has dimension $60-3d$ and the fiber of $h_1$ over a general point $(C_\Lambda,A_\Lambda)\in \mathrm{Im} h_1$ is isomorphic to the projective line 
$$
\mathbb{P}\Hom(E_1\oplus \oo_S(C-H),\omega_C\otimes A^\vee)\simeq \mathbb{P}\Hom(E_1\oplus \oo_S(C-H),E_1\oplus \oo_S(C-H));
$$
thus, the image of $h_1$ has dimension $d+1\leq 17<g$ and does not dominate the linear system $|C|$.\\\vspace{0.1cm}

\noindent{\em CASE B: There is a line bundle $M\in \Pic(S)$ destabilizing $E$ and having maximal slope.}

Having maximal slope, the line subbundle $M\subset E$ is saturated and we have a short exact sequence
\begin{equation}\label{2B}
0\to M\to E\to E/M\to 0,
\end{equation} 
where $E/M$ is a rank $2$ torsion free sheaf such that $$\mu_{C-H}(M)\geq \mu_{C-H}(E)=6\geq \mu_{C-H}(E/M).$$
The line bundle $\det E/M$ is globally generated and non-trivial by Remark \ref{sug} and thus satisfies $0<\mu_{C-H}(\det E/M)=2\mu_{C-H}(E/M)\leq 12$. In particular, by Lemma \ref{ch}(i) either $\mu_{C-H}(\det E/M)=6$, or $\mu_{C-H}(\det E/M)=12$.\\\vspace{0.05cm}

\noindent{\em SUBCASE B1: The bundle $E/M$ in \eqref{2B} satisfies $\mu_{C-H}(\det E/M)=6$.}

Lemma \ref{ch}(ii) yields $c_1(E/M)=C-H$ and $c_1(M)=H$. Since $E/M$ is generically generated, $H^2(E/M)=0$ and $\mu_{C-H}(E/M)=3$, then $E/M$ is $\mu_{C-H}$-stable by Lemma \ref{ch}(i). The inequality $\mu_{C-H}(E/M)<\mu_{C-H}(M)$ implies $\mathrm{Hom}(M,E/M)=0$. We now show that $\mathrm{Hom}(E/M,M)=0$, too. By contradiction, assume the existence of a non-zero morphism $\alpha:E/M\to M$. The image of $\alpha$ equals $\mathcal{O}_S(H-D)\otimes I_\xi$ for some effective divisor $D$ such that $\mathcal{O}_S(H-D)$ is globally generated and some $0$-dimensional subscheme $\xi\subset S$. The stability of $E/M$ yields
$$
3=\mu_{C-H}(E/M)< \mu_{C-H}(H-D))\leq \mu_{C-H}(H)=12.
$$
Since $\oo_S(2H-C)$ is non-effective, Lemma \ref{ch}(ii) implies that $D=0$. Equivalently, $\mathrm{Im}\alpha\simeq \mathcal{O}_S(H)\otimes I_\xi$ and $\mathrm{ker}\alpha \simeq \mathcal{O}_S(C-2H)\otimes I_\eta$ for some $0$-dimensional subscheme $\eta\subset S$. One gets the contradiction
$$
d=c_2(E)=H\cdot (C-H)+c_2(E/M)\geq H\cdot (C-H)+H\cdot(C-2H)=20.
$$
Therefore, $\mathrm{Hom}(E/M,M)=0$ and the fact that $E$ is non-simple forces \eqref{2B} to split, that is, $E\simeq M\oplus E/M$. However, a parameter count as the one performed in Case A (using the fact that $E/M$ moves in a moduli space of dimension $4d-60$) shows that such a splitting Lazarsfeld-Mukai bundle cannot be associated with a general curve in the linear system $|C|$ as soon as $d\leq 17$.\\\vspace{0.05cm}

\noindent{\em SUBCASE B2: The bundle $E/M$ in \eqref{2B} satisfies $\mu_{C-H}(\det E/M)=12$.}

Equivalently, we have $\mu_{C-H}(M)=\mu_{C-H}(E)=\mu_{C-H}(E/M)=6$. Since $E/M$ is generically generated and $H^2(E/M)=0$ by Remark \ref{sug}, it is $\mu_{C-H}$-semistable by Lemma \ref{ch}(i). More strongly, Lemma \ref{ch}(ii) ensures that $E/M$ is  $\mu_{C-H}$-stable as soon as the vani\-shing \mbox{$\Hom(E/M,\oo_S(C-H))=0$} holds.

By contradiction, 
let $\alpha\in\mathrm{Hom}(E/M,\oo_S(C-H))$ be non-zero. The semistabily of $E/M$ yields $c_1(\mathrm{Im}\alpha)=C-H$. One gets a short exact sequence
$$
0\to \det E/M \otimes (H-C)\otimes I_\eta\to E/M\stackrel{\alpha}{\longrightarrow} (C-H)\otimes I_\xi\to 0$$
for some $0$-dimensional subschemes $\xi$ and $\eta$, and thus
$$
c_2(E/M)\geq c_1(E/M)\cdot (C-H)-6=6;
$$
hence, by \eqref{2B}, we obtain 
\begin{equation}\label{grado}
d=c_1(M)\cdot c_1(E/M)+c_2(E/M)\geq c_1(E/M)\cdot (C-c_1(E/M))+6.
\end{equation}
On the other hand, Lemma \ref{ch}(iii) yields $c_1(E/M)\in \left\{H,2(C-H), 4C-5H\right\}$. In all the three cases inequalities $d\leq 17$ and \eqref{grado} are in contradiction. This proves that $E/M$ is $\mu_{C-H}$-stable and hence $\Hom(M,E/M)=\Hom (E/M,M)=0$. Since $E$ is non-simple, then 
$E\simeq M\oplus E/M$ and one falls under Case A. \end{proof}

The next step consists in studying linear series of type $g^3_d$.
\begin{prop}\label{prop:iuppi}
Let $S\subset \mathbb{P}^3$ be a $K3$ surface as in Theorem \ref{k3}. If $C$ is general in its linear system, then the following hold:
\begin{itemize}
\item[(i)] the Brill-Noether variety $G^3_{17}(C)$ is smooth of the expected dimension; 
\item[(ii)] the Brill-Noether variety $G^3_{16}(C)$ consists of a unique isolated point corresponding to the line bundle $\oo_C(H)$.
\end{itemize}
\end{prop}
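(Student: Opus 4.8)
The plan is to treat both parts through the rank-$4$ Lazarsfeld--Mukai bundle $E=E_{C,A}$ attached to a complete base-point-free $g^3_d$, for which $\rk E=4$, $c_1(E)=C$, $c_2(E)=d$ and $\mu_{C-H}(E)=\tfrac{C\cdot(C-H)}{4}=\tfrac92$. By Proposition~\ref{pare} the smoothness of $G^3_d(C)$ at the corresponding point is equivalent to the simplicity of $E$, which I would deduce from $\mu_{C-H}$-stability exactly as in Propositions~\ref{pencils} and~\ref{nets}. The arithmetic of Lemma~\ref{ch} drives everything: since $\mu_{C-H}(E)=9/2$ while the slope of every line bundle is divisible by $6$, a saturated destabilizing subsheaf $F$ must have $c_1(F)\cdot(C-H)\geq 6$ and only finitely many admissible first Chern classes; Remark~\ref{sug} forces the determinant of any torsion-free quotient of $E$ to be globally generated, hence effective of positive slope, and \eqref{Bog+} bounds $c_2$. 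The decisive dichotomy between the two parts is \eqref{Bog+} itself: a $\mu_{C-H}$-stable rank-$4$ bundle with $c_1=C$ must satisfy $c_2\geq \tfrac38\cdot 34+4-\tfrac14=\tfrac{33}{2}$, i.e. $c_2\geq 17$. Thus stability is numerically possible for $d=17$ but impossible for $d=16$.

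For (i) I would first reduce to the complete base-point-free case. A $g^3_{17}$ with $h^0\geq 5$ would be part of a $g^4_{17}$, whose rank-$5$ Lazarsfeld--Mukai bundle has $c_2=17$ below its stability threshold $\tfrac{4}{10}\cdot 34+5-\tfrac15=\tfrac{92}{5}>17$ and is ruled out by the analogous destabilization analysis; a complete $g^3_{17}$ with a base point is $\oo_C(H+p)$, its moving part being the unique $g^3_{16}$ of part (ii). In the complete base-point-free case I would list, via Lemma~\ref{ch}, the finitely many destabilizing configurations---a saturated sub-line-bundle with $c_1(F)\in\{C-H,\,H,\,2H-C,\,5H-3C\}$, or a rank-$2$ destabilizer with $c_1(F)=H$ and quotient determinant $C-H$---and exclude each one. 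The mechanism reproduces Case~A and Subcase~B1 of Proposition~\ref{nets}: once the relevant $\Hom$-spaces between the Jordan--H\"older pieces vanish, non-simplicity of $E$ forces a splitting (or a fixed extension) of bundles whose moduli dimension is computed by \eqref{dim}, and a Grassmann-bundle count shows that the resulting family of pairs $(C',A')$ fails to dominate $|C|$, against the generality of $C$. Stability then yields simplicity, hence an injective Petri map and smoothness of $G^3_{17}(C)$ of dimension $\rho(18,3,17)=2$ at every such point.

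For (ii), existence is immediate: from $0\to\oo_S(H-C)\to\oo_S(H)\to\oo_C(H)\to 0$ and $h^0(\oo_S(H-C))=h^1(\oo_S(H-C))=0$ one gets $h^0(\oo_C(H))=h^0(\oo_S(H))=4$, so $\oo_C(H)$ is a $g^3_{16}$ (its degree being $H\cdot C=16$). For uniqueness, let $A$ be any $g^3_{16}$, reduced to complete and base-point-free (a base point would produce a $g^3_{15}$, whose bundle has $c_2=15$ still further below the threshold; the same destabilization yields $\oo_S(C-H)\hookrightarrow E$ and then a degree count through Remark~\ref{sei} gives a contradiction). Its bundle $E$ has $c_2=16<17$, so by \eqref{Bog+} it is not $\mu_{C-H}$-stable; running the destabilization analysis with this smaller $c_2$ forces the maximal destabilizing subsheaf to be the line bundle $\oo_S(C-H)$---the unique effective slope-$6$ class by Lemma~\ref{ch}(ii)---the remaining configurations being killed by the $c_2$ estimates coming from Remark~\ref{sug} and \eqref{Bog+}. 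With an inclusion $\oo_S(C-H)\hookrightarrow E$ and $h^0(\oo_S(C-H))=5\geq 2$, Remark~\ref{sei} shows that $|A|$ lies in the restriction of $|H|$ to $C$; as both $|A|$ and $|\oo_C(H)|$ are $g^3_{16}$'s this forces $A=\oo_C(H)$, so $G^3_{16}(C)=\{\oo_C(H)\}$ is a single, hence isolated, point. Reducedness (vanishing of the tangent space, i.e. $\dim\ker\mu_{0,\oo_C(H)}=h^0(E^\vee\otimes E)-1=-\rho(18,3,16)=2$) would follow from computing $h^0(E^\vee\otimes E)=3$ through the sequence $0\to\oo_S(C-H)\to E\to E_{D,\omega_D}\to 0$ of Lemma~\ref{rest}, with $D\in|H|$ a hyperplane section of genus $3$, together with the simplicity of $E_{D,\omega_D}$ provided by Lemma~\ref{cano}.

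I expect the main difficulty to be twofold. First, the rank-$4$ analysis has strictly more destabilizing configurations than the rank-$2$ and rank-$3$ arguments of Propositions~\ref{pencils}--\ref{nets}, and in the $c_2=16$ case one must rule out strictly semistable Jordan--H\"older patterns in order to be certain that the destabilizing piece is precisely $\oo_S(C-H)$ rather than some other effective slope-$6$ class entering a graded quotient. Second---and this is the subtler point for (i)---the base-point-free analysis does not by itself cover the based series $\oo_C(H+p)$ that lie in $G^3_{17}(C)$; here one has $\ker\mu_{0,\oo_C(H+p)}=\ker\mu_{0,\oo_C(H)}\cap\bigl(H^0(\oo_C(H))\otimes H^0(\oo_C(C-H)(-p))\bigr)$, so injectivity of the Petri map for every $p$ amounts to checking that, for each nonzero element of the $2$-dimensional space $\ker\mu_{0,\oo_C(H)}$, the sections of $\oo_C(C-H)$ appearing in it have no common zero---a base-point-freeness statement that must be extracted from the explicit extension description of $\ker\mu_{0,\oo_C(H)}$ obtained in (ii).
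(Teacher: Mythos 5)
Your overall strategy coincides with the paper's: reduce to complete base-point-free series, run the $\mu_{C-H}$-destabilization analysis on the rank-$4$ Lazarsfeld--Mukai bundle using Lemma \ref{ch}, Remark \ref{sug}, \eqref{Bog+} and Grassmann-bundle parameter counts, pin down the unique $g^3_{16}$ via Remark \ref{sei}, and compute $\dim\ker\mu_{0,\oo_C(H)}=2$ from the extension $0\to\oo_S(C-H)\to E_{C,\oo_C(H)}\to E_{H,\omega_H}\to 0$ of Lemma \ref{rest}. The serious gap is in this last step: you assert that $h^0(E^\vee\otimes E)=3$ follows from that extension \emph{together with the simplicity of $E_{H,\omega_H}$} (Lemma \ref{cano}). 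Simplicity only controls $\Hom(E_{H,\omega_H},E_{H,\omega_H})$; the number $h^0(E^\vee\otimes E)$ also receives a contribution from $\Hom(E_{H,\omega_H},\oo_S(C-H))$ and depends on whether the extension splits, and if $\dim\Hom(E_{H,\omega_H},\oo_S(C-H))\geq 2$ the tangent space to $G^3_{16}(C)$ at $\oo_C(H)$ would be positive-dimensional and (ii) would fail. This is precisely where the paper spends most of its effort: it proves that $E_{H,\omega_H}$ is $\mu_{C-H}$-stable, that any nonzero $\alpha:E_{H,\omega_H}\to\oo_S(C-H)$ is surjective with stable rank-$2$ kernel $K$ (sequence \eqref{sonno}, where \eqref{Bog+} forces $l(\xi)=0$), and finally that $\Hom(K,\oo_S(C-H))=0$; only then does one obtain $\dim\Hom(E_{H,\omega_H},\oo_S(C-H))\leq 1$, which yields simultaneously the splitting of \eqref{london} and $h^0(E^\vee\otimes E)=3$. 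A smaller weakness: your exclusion of series with $r\geq 4$ via a rank-$5$ Lazarsfeld--Mukai bundle is both under-argued (violating the stability bound of \eqref{Bog+} only says the bundle is non-stable, which is not by itself a contradiction --- you would have to redo the entire case analysis in rank $5$, including base-point issues for the $g^4_{17}$) and unnecessary: once the complete base-point-free $g^3_d$'s with $d\leq 16$ are known to reduce to the single point $\oo_C(H)$, any $g^r_d$ with $r\geq 4$ and $d\in\{16,17\}$ would produce a positive-dimensional family of complete $g^3_{16}$'s, an immediate contradiction; this is the paper's route in Lemma \ref{intermedio}.

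The difficulty you flag at the end, by contrast, is genuine and correctly stated: smoothness of $G^3_{17}(C)$ at the complete but non-base-point-free points $\oo_C(H+p)$ requires $\ker\mu_{0,\oo_C(H)}\cap\bigl(H^0(\oo_C(H))\otimes H^0(\omega_C(-H)(-p))\bigr)=0$ for every $p\in C$, and this does not formally follow from $\dim\ker\mu_{0,\oo_C(H)}=2$: a $2$-plane may well meet a codimension-$4$ subspace of the $20$-dimensional tensor space nontrivially. It is worth recording that the paper's own proof is silent on exactly this point: after Lemma \ref{intermedio} it asserts that computing $\dim\ker\mu_{0,\oo_C(H)}=2$ is all that remains, which settles (ii) and the base-point-free part of (i), but not the points $\oo_C(H+p)$, whose tangent space has dimension $2+\dim\ker\mu_{0,\oo_C(H+p)}$. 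So on this step your proposal is more scrupulous than the paper; but neither you nor the paper carries out the required verification, which would have to be extracted from the explicit splitting $E_{C,\oo_C(H)}\simeq\oo_S(C-H)\oplus E_{H,\omega_H}$ and the one-dimensional space $\Hom(E_{H,\omega_H},\oo_S(C-H))$. Until that (or some substitute) is done, part (i) is not fully proved --- in your write-up or in the paper's.
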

We will first prove the following weaker result:
\begin{lem}\label{intermedio}
Let $S\subset \mathbb{P}^3$ be a $K3$ surface as in Theorem \ref{k3}. If $C$ is general in its linear system, then the Petri map associated with any base point free $g^3_{17}$ on $C$ is injective and the only $g^3_{16}$ is $\oo_C(H)$.
\end{lem}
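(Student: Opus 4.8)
The plan is to translate both assertions into statements about the rank-$4$ Lazarsfeld--Mukai bundle $E=E_{C,A}$ of a base point free $g^3_d$ ($d\in\{16,17\}$) and to study its $\mu_{C-H}$-stability exactly as in Propositions \ref{pencils} and \ref{nets}. By Proposition \ref{pare}, for $C$ general the Petri map of a complete $g^3_{17}$ is injective if and only if $E$ is simple, so the first assertion reduces to the simplicity of $E$; I would dispose of the non-complete and base-pointed series at the end by a degree argument. The basic numerical input is that $c_1(E)=C$ gives $\mu_{C-H}(E)=C\cdot(C-H)/4=18/4=9/2$. Since Lemma \ref{ch}(i) forces $c_1(F)\cdot(C-H)\in 6\Z$ for every subsheaf $F\subset E$, no proper subsheaf can have slope exactly $9/2$; hence $E$ is $\mu_{C-H}$-stable as soon as it is $\mu_{C-H}$-semistable, and non-simplicity of $E$ produces a genuine destabilizing sequence $0\to F\to E\to Q\to 0$ with $Q$ torsion free, $\mu_{C-H}(F)>9/2>\mu_{C-H}(Q)$, and (by Remark \ref{sug}) $\det Q$ effective and globally generated.

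For $d=17$ I would enumerate the possibilities according to $s=\rk F\in\{1,2,3\}$ and the value of $q:=c_1(Q)\cdot(C-H)=18-c_1(F)\cdot(C-H)$. A rank-$1$ quotient is impossible, because $\mu_{C-H}(Q)=q$ would be a positive multiple of $6$ that is $<9/2$. The remaining cases leave, via Lemma \ref{ch}(ii)--(iii) and the effectivity of $\det Q$, only a short list of Chern classes; the classes $2(C-H)$ and $4C-5H$ are discarded since the complementary determinant is not effective. The two genuinely surviving shapes are: a saturated sub-line-bundle $\oo_S(C-H)\hookrightarrow E$, excluded at once by Remark \ref{sei} (it would force $|A|\subseteq|\oo_C(H)|$, impossible for $\deg A=17>16$); and configurations in which $\oo_S(H)$ appears as a rank-$1$ or rank-$2$ (semistable, with $\det=\oo_S(H)$) subbundle. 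In the latter cases I would first check that the relevant $\Hom$-spaces vanish --- using stability of the rank-$2$/rank-$3$ pieces, and reducing a properly semistable $F$ to the sub-line-bundle case via its slope-$6$ subsheaf $\oo_S(C-H)$ --- so that $E$ splits; then a parameter count as in Proposition \ref{nets} (Case A) shows that such split bundles sweep out a family of dimension $<g=18$ inside $|C|$, contradicting the generality of $C$. Concretely, for $E\cong\oo_S(H)\oplus Q$ with $Q$ of rank $3$ the Grassmann bundle over the moduli of $Q$ has dimension $18$, while the fibre of the classifying map is $\mathbb P\Hom(E,E)$ of dimension $\hom(E,E)-1\ge 1$ (the splitting gives $\hom(E,E)\ge 2$), so the image in $|C|$ has dimension $\le 17$.

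For $d=16$ the same framework applies, but now $c_2(E)=16$ violates \eqref{Bog+} (which would require $c_2\ge 17$), so $E$ is never $\mu_{C-H}$-stable and a destabilizing sequence exists automatically. The parameter-count configurations again fill families of dimension $<18$ in $|C|$, so for $C$ general the only surviving possibility is the sub-line-bundle $\oo_S(C-H)\hookrightarrow E$; by Remark \ref{sei} this yields $|A|\subseteq|\oo_C(H)|$, and equality of degrees gives $A=\oo_C(H)$. That $\oo_C(H)=\oo_S(H)\otimes\oo_C$ is indeed a $g^3_{16}$ is confirmed by Lemma \ref{rest} with $N=\oo_S(C-H)$ and $M=\oo_S(H)$, which presents $E_{C,\oo_C(H)}$ as an extension of the canonical Lazarsfeld--Mukai bundle $E_{D,\omega_D}$ of a genus-$3$ curve $D\in|H|$ by $\oo_S(C-H)$ (also explaining why its Petri map is non-injective, as $\rho(18,3,16)=-2$). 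Finally, a $g^3_{16}$ with a base point would produce a base point free $g^3_e$ with $e<16$, whose bundle has $c_2=e$ and is excluded by the very same analysis (the surviving sub-line-bundle case would force $|A|\subseteq|\oo_C(H)|$ with $\deg A=e<16$, which is absurd); hence no $g^3_{16}$ has base points, and the reduction to the base point free case is legitimate.

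The main obstacle is the intermediate-rank destabilization, i.e.\ the case $\rk F=2$ (with $\det F=\oo_S(H)$ and $\det Q=\oo_S(C-H)$) together with the rank-$3$ quotient cases. Here one must pin down the second Chern classes of the rank-$2$ and rank-$3$ pieces, decide their stability (or peel off an already-treated slope-$6$ sub-line-bundle), and verify the vanishing of the cross $\Hom$-spaces needed to split $E$; only then does the moduli-dimension bookkeeping via \eqref{dim} become available. Because for $d=17$ the bound \eqref{Bog+} is nearly an equality, these dimension estimates are tight and must be carried out carefully to guarantee that every split configuration really sweeps a family of dimension strictly below $18$.
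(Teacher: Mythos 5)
Your framework is the paper's own: pass to the rank-$4$ Lazarsfeld--Mukai bundle $E=E_{C,A}$, destabilize with respect to $C-H$, enumerate Chern classes via Lemma \ref{ch} and Remark \ref{sug}, kill split configurations by parameter counts, and identify the survivor with $\oo_C(H)$ via Remark \ref{sei}. But there is a genuine gap exactly where the proof is hardest. You discard the quotient classes $2(C-H)$ and $4C-5H$ ``since the complementary determinant is not effective.'' Effectivity and global generation are available only for \emph{quotients} of a Lazarsfeld--Mukai bundle (that is what Remark \ref{sug} gives); nothing forces a destabilizing \emph{subsheaf} of $E$ to have effective determinant. Both $2H-C$ and $5H-3C$ have $\mu_{C-H}=6>0$, so they pass every slope test, and these are real cases. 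The paper excludes $c_1(N)=5H-3C$ by the inequality $d=c_2(E)\geq c_1(N)\cdot c_1(E/N)=(4C-5H)\cdot(5H-3C)=52$, while the case $c_1(N)=2H-C$ is the heart of its Subcase B2: there $c_2(E/N)=d-12\leq 5$, so $E/N$ cannot be $\mu_{C-H}$-stable (stability plus \eqref{Bog+} would force $c_2(E/N)\geq 32/3$); Lemma \ref{ch}(i) rules out a rank-$2$ destabilizing subsheaf, hence $E/N$ is destabilized in rank $1$, and Remark \ref{sug} with Lemma \ref{ch}(i)-(ii) applied to the resulting rank-$2$ quotient forces $c_2(E/N)\geq(C-H)^2=6$, a contradiction. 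Without arguments of this type your case list is simply incomplete. Relatedly, the ``cross'' Hom-vanishings you want from stability in the $\oo_S(H)$-cases do not follow from stability: the maps in question go from small slope to large slope (e.g.\ $\Hom(E/N,\oo_S(H))$ with $\mu_{C-H}(E/N)=2<12$), which slope inequalities allow. The paper kills such a map by analyzing its image (Lemma \ref{ch} plus non-effectivity of $2H-C$ give $\mathrm{Im}\,\alpha=\oo_S(H)\otimes I_\xi$) and deriving $5=c_2(E/N)\geq H\cdot(C-2H)=8$; note also that in the paper this step comes \emph{after} the parameter count proving that \eqref{3B} does not split, since non-splitting is what converts non-simplicity into the existence of $\alpha$.

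A second gap concerns the scope of the statement: it covers \emph{any} base point free $g^3_{17}$, in particular non-complete ones, and Proposition \ref{pare} reduces Petri-injectivity to simplicity of the Lazarsfeld--Mukai bundle only for \emph{complete} series (it requires $h^1(E_{C,(A,V)})=0$). Your promised ``degree argument'' at the end treats only $g^3_{16}$'s with base points and never disposes of non-complete series. The paper's reduction is essential and different: it runs the analysis for complete base point free $g^3_d$ for \emph{all} $d\leq 17$, and then observes that a $g^r_d$ with $r\geq 4$ and $d\in\{16,17\}$ (which is what a non-complete $g^3_{17}$ or $g^3_{16}$ produces) yields a positive-dimensional family of complete $g^3_{16}$'s, incompatible with $\oo_C(H)$ being the only one. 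Finally, your exclusion of complete base point free $g^3_e$ with $e<16$ misreads Remark \ref{sei}: the containment $|A|\subseteq|\oo_C(H)|$ there means that $\oo_C(H)\otimes A^\vee$ is effective, which is not absurd for $e<16$; to conclude one must use that $h^0(A)=4=h^0(\oo_C(H))$ and that $|\oo_C(H)|$ is base point free, forcing the base divisor to vanish and hence $e=16$.
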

\begin{proof}
It is enough to consider complete linear series of type $g^3_d$ for $d=16,17$. Indeed, if $C$ admits a $g^r_d$ with $r\geq 4$ and $d=16,17$, then it is easy to show that it admits a positive dimensional family of complete (but not necessarily base point free) $g^3_{16}$. Furthermore, instead of considering complete $g^3_{16}$ and $g^3_{17}$ with base points, we will study complete, base point free linear series of type $g^3_d$ for all values of $d\leq 17$. 

Let $E:=E_{C,A}$ be a non-simple rank $4$ Lazarsfeld-Mukai bundle associated with a complete base point free $A$ on $C$ of type $g^3_d$ for $d\leq17$ such that the Petri map $\mu_{0,A}$ is non-injective (the last request is automatically satisfied if $d\leq16$). Since $\mu_{C-H}(E)=9/2$, Lemma \ref{ch}(i) excludes that $E$ is destabilized by a vector subbundle of rank $3$. Hence, only two cases need to be taken in consideration. \\\vspace{0.1cm}

\noindent{\em CASE A: The maximal destabilizing subsheaf of $E$ is a $\mu_{C-H}$-stable rank $2$ vector bundle $E_1$.} 

We have a short exact sequence:
\begin{equation}\label{3A}
0\to E_1\to E\to E_2\to 0,
\end{equation}
where $E_2$ is a torsion free sheaf of rank $2$ satisfying $\mu_{C-H}(E_2)\leq \mu_{C-H}(E)=9/2$. Again Lemma \ref{ch}(i) forces $E_2$ to be stable. Therefore, $c_2(E_i)\geq \frac{3}{2}+\frac{1}{4}c_1(E_i)^2$ for $i=1,2$ by \eqref{Bog+}. Furthermore, $\det E_2$ is globally generated and non-trivial by Remark \ref{sug} and its slope is bounded above by $2\mu_{C-H}(E)=9$. Lemma \ref{ch}(i)-(ii) thus implies $c_1(E_2)=C-H$ and $c_1( E_1)=H$.  One gets the contradiction
$$
17\geq d=c_2(E)=H\cdot(C-H)+c_2(E_1)+c_2(E_2)\geq 15+\frac{1}{4}(H^2+(C-H)^2)=\frac{35}{2}.
$$\vspace{0.1cm}

\noindent{\em CASE B: There is a line bundle $N\in\Pic(S)$ destabilizing $E$ and having maximal slope.}

The line bundle $N$ is a saturated subsheaf of $E$ and thus sits in a short exact sequence
\begin{equation}\label{3B}
0\to N\to E\to E/N\to 0,
\end{equation}
where $E/N$ is a torsion free sheaf of rank $3$ such that $\mu_{C-H}(E/N)\leq \mu_{C-H}(E)=9/2$. By Remark \ref{sug}, $\det E/N$ is a non-trivial globally generated line bundle whose slope is bounded above by $3\mu_{C-H}(E)=27/2$. Lemma \ref{ch} yields that either $c_1(E/N)=C-H$ or $\mu_{C-H}(\det E/N)=12$. \\\vspace{0.05cm}

\noindent{\em SUBCASE B1 :  The sheaves in \eqref{3B} satisfy $c_1(N)=H$ and $c_1(E/N)=C-H$.}

 Since $\mu_{C-H}(E/N)=2$, then $E/N$ is stable by Lemma \ref{ch} (i) (as it cannot admit a destabilizing quotient sheaf of smaller rank for slope reasons). In particular, from \eqref{Bog+} we get $c_2(E/N)\geq \frac{8}{3}+\frac{1}{3}c_1(E/N)^2=\frac{14}{3}$, and hence:
$$
17\geq d=c_2(E/N)+c_1(N)\cdot c_1(E/N)=c_2(E/N)+H\cdot (C-H)\geq 12+\frac{14}{3}.$$ The only possibility is thus $c_2(E/N)=5$ and $d=17$.  We first show that, if $C$ is general in its linear system, then \eqref{3B} cannot split. The sheaf $E/N$ moves in a moduli space $\mathcal M$ of dimension $2$, cf. \eqref{dim}; let $\mathcal M^\circ$ be its open subset parametrizing generically generated torsion free sheaves with vanishing $H^1$ and $H^2$. Over $\mathcal M^\circ$ we consider the Grassmann bundle $\G$ whose fiber over a general $[F]\in \mathcal M^\circ$ is the $16$-dimensional Grassmannian $G(4, H^0(\oo_S(H)\oplus F))$.  It is enough to remark that the image of the rational map \mbox{$h:\mathcal G \dashrightarrow\G^4_{17}(|C|)$} defined as in the proof of Proposition \ref{nets} does not dominate $|C|$; this follows because $\dim\,\G=18$ and the fibers of $h$ have positive dimension. 

Hence, \eqref{3B} does not split. Note that $\Hom(N,E/N)=0$ as $E/N$ is $\mu_{C-H}$-stable. The non-simplicity of $E$ implies the existence of a morphism $0\neq \alpha: E/N\to N\simeq \mathcal{O}_S(H)$. Write $\mathrm{Im}\,\alpha=\mathcal{O}_S(H-D)\otimes I_\xi$ for some effective divisor $D$ and $0$-dimensional subscheme $\xi$. As $E/N$ is $\mu_{C-H}$-stable and globally generated, then $\mathcal{O}_S(H-D)$ is a globally generated line bundle satisfying 
$$
2=\mu_{C-H}(E/N)< \mu_{C-H}(H-D))\leq \mu_{C-H}(H)=12.
$$
By Lemma \ref{ch}, either $D=0$ or $H-D\equiv C-H$; the latter case can be excluded since it implies $D\sim 2H-C$, which is not effective. We conclude that $D=0$ and get the following short exact sequence
$$
0\to \mathcal{O}_S(C-2H)\otimes I_\eta\to E/N \stackrel{\alpha}{\to} \oo_S(H)\otimes I_\xi\to 0$$ for some $0$-dimensional subschemes $\xi,\eta\subset S$. This leads to the contradiction 
$5=c_2(E/N)\geq H\cdot (C-2H)=8$.\\\vspace{0.05cm}

\noindent{\em SUBCASE B2 :  The sheaf $E/N$ in \eqref{3B} satisfies $\mu_{C-H}(\det E/N)=12$.}

We apply Lemma \ref{ch}(iii). The case $c_1(E/N)=4C-5H$ does not occur because it would imply $c_1(N)=5H-3C$ and thus the contradiction $$d=c_2(E)\geq (4C-5H)\cdot (5H-3C)=52.$$ Now, assume $c_1(E/N)=2(C-H)$ and $c_1(N)=2H-C$. It follows that 
\begin{equation}\label{stanca}
c_2(E/N)=d-2(C-H)(2H-C)=d-12\leq 5.
\end{equation} 
In particular, $E/N$ cannot be $\mu_{C-H}$-stable because otherwise \eqref{Bog+} would imply $c_2(E/N)\geq \frac{8}{3}+\frac{1}{3}c_1(E/N)^2=\frac{32}{3}$.  However, since $\mu_{C-H}(E/N)=4$ and $h^2(E/N)=0$, Lemma \ref{ch}(i) excludes that $E/N$ is destabilized by any subsheaf of rank $2$. The sheaf $E/N$ is thus destabilized by a subsheaf $M$ of maximal slope and rank $1$ and the quotient $Q:=(E/N)/M$ is a generically generated torsion free sheaf of rank $2$ sa\-ti\-sfying $H^2(Q)=0$. By Remark \ref{sug}, $\det Q$ is a non-trivial globally generated line bundle such that $\mu_{C-H}(\det Q)=2\mu_{C-H}(Q)\leq 2\mu_{C-H}(E/N)=8$; again Lemma \ref{ch}(i)-(ii) yields $c_1(Q)=C-H=c_1(M)$ and $c_2(E/N)\geq (C-H)^2=6$, contradicting \eqref{stanca}. This excludes the case  $c_1(E/N)=2(C-H)$.

It remains to consider the case $c_1(E/N)=H$ and $c_1(N)=C-H$. Remark \ref{sei} then implies that the linear series $|A|$ is contained in $|\mathcal{O}_C(H)|$, which is a complete base point free $g^3_{16}$. The only possibility is thus $A\simeq \mathcal{O}_C(H)$.
\end{proof}

\begin{proof}[Proof of Proposition \ref{prop:iuppi}]
Lemma \ref{intermedio} implies that any linear series of type $g^3_{16}$ or $g^3_{17}$ is complete. Furthermore, $G^3_{17}(C)$ is smooth at all points corresponding to base point free linear series. In order to conclude, it remains to show that $\dim\ker\mu_{0,  \mathcal{O}_C(H)}=2$, or, equivalently by Proposition \ref{pare}, that the Lazarsfeld-Mukai bundle $E=E_{C, \mathcal{O}_C(H)}$ satisfies $h^0(E\otimes E^\vee)=3$. By Lemma \ref{rest}, $E$ sits in the short exact sequence
\begin{equation}\label{london}
0\to \mathcal{O}_S(C-H)\to E\to E_{H,\omega_H}\to 0,
\end{equation}
where $E_{H,\omega_H}$ is the rank $3$ Lazarsfeld-Mukai bundle associated with the canonical sheaf $\omega_H$ on any smooth hyperplane section $H$ of $S$ (cf. Lemma \ref{cano}). 

We first claim that $E_{H,\omega_H}$ is $\mu_{C-H}$-stable. As $E_{H,\omega_H}$ is globally generated and satisfies $H^2(E_{H,\omega_H})=0$ and $\mu_{C-H}(E_{H,\omega_H})=4$, then Lemma \ref{ch}(i) implies that it cannot be destabilized by any vector bundle of rank $2$. If it is not stable, there exists a destabilizing sequence
$$0\to N\to E_{H,\omega_H}\to E_{H,\omega_H}/N\to 0,$$
with $N\in \Pic(S)$ and $Q:=E_{H,\omega_H}/N$ a globally generated torsion free sheaf of rank $2$ satisfying $h^2(Q)=0$. In particular, the line bundle $\det Q$ is globally generated and non-trivial by Remark \ref{sug} and satisfies
$$
0<\mu_{C-H}(\det Q)=2\mu_{C-H}(Q)\leq 2\mu_{C-H}(E_{H,\omega_H})=8;
$$
hence, $c_1(Q)=C-H$ by Lemma \ref{ch}(i)-(ii) and $c_1(N)=2H-C$. One gets
$$
4=c_2(E_{H,\omega_H})=(C-H)\cdot (2H-C)+c_2(Q)=6+c_2(Q),
$$ 
and this is a contradiction since the second Chern class of a globally generated rank $2$ torsion free sheaf on $S$ is always  positive. Therefore, $E_{H,\omega_H}$ is $\mu_{C-H}$-stable as claimed.

By applying first $\Hom(E,-)$ and then $\Hom(-,\mathcal{O}_S(C-H))$ and $\Hom(-,E_{H,\omega_H})$ to \eqref{london} and by remarking that $\Hom(\mathcal{O}_S(C-H),E_{H,\omega_H})=0$ for slope reasons, one shows that
$$
2\leq \dim\ker\mu_{0,  \mathcal{O}_C(H)}= h^0(S,E\otimes E^\vee)-1\leq 1+\dim\Hom(E_{H,\omega_H},\mathcal{O}_S(C-H)),
$$
and the inequality is strict unless the sequence (\ref{london}) splits. Therefore, if we prove that $\dim\Hom(E_{H,\omega_H},\mathcal{O}_S(C-H))\leq1$, then $E\simeq\mathcal{O}_S(C-H)\oplus E_{H,\omega_H}$ and $\dim\ker\mu_{0,  \mathcal{O}_C(H)}=2$, as desired. Given $0\neq \alpha:E_{H,\omega_H}\to \mathcal{O}_S(C-H)$, there exist an effective divisor $D$ and a $0$-dimensional subscheme $\xi\subset S$ such that $\mathrm{Im}\,\alpha=\oo_S(C-H-D)\otimes I_{\xi}$. The line bundle $\oo_S(C-H-D)$ is globally generated and its slope is bounded below by $\mu_{C-H}(E_{H,\omega_H})=4$ and above by $\mu_{C-H}(C-H)=6$. Lemma \ref{ch} thus yields $D=0$ and $E_{H,\omega_H}$ sits in the following short exact sequence:
\begin{equation}\label{sonno}
0\to K\to E_{H,\omega_H}\to \oo_S(C-H)\otimes I_{\xi}\to 0,
\end{equation}
where $K$ is a vector bundle of rank $2$ such that $c_1(K)=2H-C$, $\chi(K)=l(\xi)-1$ and $c_2(K)=-2-l(\xi)$. Moreover, $K$ is $\mu_{C-H}$-stable because otherwise it would be destabilized by a line bundle $N$  such that
$$
3=\mu_{C-H}(K)\leq\mu_{C-H}(N)<\mu_{C-H}(E_{H,\omega_H})=4,
$$
thus contradicting Lemma \ref{ch}(i). The stability of $K$ implies $c_2(K)\geq\frac{3}{2}+\frac{1}{4}c_1(K)^2=-2$ by \eqref{Bog+},
hence $l(\xi)=0$. By applying $\Hom(-,\oo_S(C-H))$ to the sequence \eqref{sonno}, one finds that $$\dim\Hom(E_{H,\omega_H},\oo_S(C-H))\leq 1+\dim\Hom(K,\oo_S(C-H)).$$ We will now show that $\Hom(K,\oo_S(C-H))=0$, which concludes the proof. If there exists $0\neq \beta:K\to\oo_S(C-H)$, then $\im\,\beta=\oo_S(C-H-D_1)\otimes I_{\xi_1}$ for some divisor $D_1\geq 0$ and $0$-dimensional subscheme $\xi_1\subset S$. Since $K$ is stable, then 
$$3=\mu_{C-H}(K)\leq \mu_{C-H}(C-H-D_1)\leq\mu_{C-H}(C-H)=6,$$  thus $D_1=0$ by Lemma \ref{ch}(i) and one gets the following short exact sequence:
$$
0\to \oo_S(3H-2C)\to K\stackrel{\beta}{\to} \oo_S(C-H)\otimes I_{\xi_1}\to 0.
$$
One gets a contradiction since $-2=c_2(K)=(3H-2C)\cdot(C-H)+l(\xi_1)\geq 0$. This concludes the proof.
\end{proof}
In order to conclude the proof of Theorem \ref{k3}, it only remains to show that the Brill-Noether varieties of $C$ are smooth of the expected dimension at the points parametrizing non-complete linear series.

\begin{prop}\label{noncomplete}
Let $S\subset \mathbb{P}^3$ be a $K3$ surface as in Theorem \ref{k3}. If $C$ is general in its linear system, then any non-complete linear series on $C$ of degree $\leq g-1=17$ has injective Petri map.
\end{prop}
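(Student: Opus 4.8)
The plan is to recast the whole statement, via Proposition \ref{pare}, as the assertion $h^0\!\left(E_{C,(A,V)}^\vee\otimes\omega_C\otimes A^\vee\right)=1$ for every base point free non-complete $(A,V)$ of degree $\le 17$, and then to bound this number by comparison with the \emph{complete} Lazarsfeld--Mukai bundle $E_{C,A}$. First I would reduce to the base point free case: if $(A,V)$ has base locus $B$, division by a section cutting $B$ identifies $\ker\mu_{0,V}$ with a subspace of $\ker\mu_{0,V_0}$ for the induced base point free series $(A_0,V_0)=(A(-B),V)$ of strictly smaller degree, so it suffices to treat base point free series. For such a series the completion $|A|$ is again base point free of degree $\le 17$, and since $\dim V\ge 2$ it is a complete $g^{r'}_d$ with $r'\ge 2$.

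The key comparison comes from the universal extension \eqref{univ}. Dualizing $0\to\oo_S^{\oplus s}\to E_{C,A}\to E_{C,(A,V)}\to 0$, where $s=h^1(E_{C,(A,V)})=h^0(A)-r-1$, and tensoring by the $\oo_C$-module $\omega_C\otimes A^\vee$ (which kills the $\mathcal{T}or_1$ terms) yields the exact sequence
$$0\to E_{C,(A,V)}^\vee\otimes\omega_C\otimes A^\vee\to E_{C,A}^\vee\otimes\omega_C\otimes A^\vee\to\left(\omega_C\otimes A^\vee\right)^{\oplus s}\to 0.$$
Passing to global sections, $h^0\!\left(E_{C,(A,V)}^\vee\otimes\omega_C\otimes A^\vee\right)$ is the kernel of the induced map $\psi_V$. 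On the other hand, applying $\Hom(E_{C,A},-)$ to \eqref{LM} and using $H^0(E_{C,A}^\vee)=0$ (all stable factors of $E_{C,A}$ have positive $\mu_{C-H}$-slope) together with $H^1(E_{C,A}^\vee)=H^1(E_{C,A})^\vee=0$ (the completeness of $|A|$) identifies $h^0\!\left(E_{C,A}^\vee\otimes\omega_C\otimes A^\vee\right)$ with $\dim\Hom(E_{C,A},E_{C,A})=\dim\ker\mu_{0,A}+1$. Hence $h^0\!\left(E_{C,(A,V)}^\vee\otimes\omega_C\otimes A^\vee\right)\le\dim\ker\mu_{0,A}+1$, and the inclusion $\oo_C\hookrightarrow E_{C,(A,V)}^\vee\otimes\omega_C\otimes A^\vee$ of \eqref{ker} forces the left-hand side to be $\ge 1$.

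In particular, whenever the completion $E_{C,A}$ is simple the bound reads $1\le h^0\le 1$, so $\mu_{0,V}$ is injective with no further work. It therefore remains only to isolate the completions with non-simple Lazarsfeld--Mukai bundle. By Propositions \ref{pencils}, \ref{nets} and \ref{prop:iuppi}, together with the fact that any series of smaller degree or higher rank would, after adding base points, produce a forbidden $g^2_{13}$, a second $g^3_{16}$, or a $g^4_{17}$ (none of which $C$ admits), the unique complete base point free series of degree $\le 17$ whose bundle fails to be simple is $A=\oo_C(H)$, for which $\dim\ker\mu_{0,\oo_C(H)}=2$. A degree count shows that the base point free part of a non-complete series of degree $\le 17$ can never be the \emph{full} system $|\oo_C(H)|$, so the reduction of the first paragraph is harmless and the whole proposition collapses to the base point free non-complete sub-series $V\subsetneq H^0(\oo_C(H))$.

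This last case is the heart of the matter and the main obstacle, precisely because here $\Hom(E_{C,A},E_{C,A})$ is three-dimensional and does not govern $\ker\mu_{0,V}$. I would use the explicit splitting $E_{C,\oo_C(H)}\simeq\oo_S(C-H)\oplus E_{H,\omega_H}$ of \eqref{london}, so that (by the simplicity of $E_{H,\omega_H}$ from Lemma \ref{cano} and a slope computation) $\Hom(E_{C,A},E_{C,A})$ is spanned by the two identities and the unique nonzero map $\beta\colon E_{H,\omega_H}\to\oo_S(C-H)$. The connecting map $\psi_V$ is induced by the section $v=(v_1,v_2)\in H^0(\oo_S(C-H))\oplus H^0(E_{H,\omega_H})$ defining the subbundle $\oo_S^{\oplus s}\hookrightarrow E_{C,A}$; the identity automatically lies in $\ker\psi_V$, and a direct computation reduces the equality $\ker\psi_V=\mathbb{C}\cdot\mathrm{id}$ (for $s=1$) to the linear independence of $v_1$ and $\beta(v_2)$ in $H^0(\oo_S(C-H))$, with an analogous, more constrained condition for the pencils ($s=2$). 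The real difficulty is that this non-degeneracy must hold for \emph{every} admissible $V$, not merely for general ones: I expect to establish it by combining the constraint that $v$ lie in $H^0(A)^\vee=\ker\pi_\ast$ with the requirement that $v$ be nowhere vanishing (so that $E_{C,(A,V)}$ is locally free), using the generality of $C$ to exclude the degeneration $v_1\in\mathbb{C}\,\beta(v_2)$. Ruling out this degeneration uniformly in $V$ is where the essential work lies.
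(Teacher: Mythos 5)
Your reductions are correct: removing base points, using $\ker\mu_{0,V}\subseteq\ker\mu_{0,A}$ for the completion $A$ (which, incidentally, makes your universal-extension comparison in the second paragraph unnecessary --- the restriction of an injective map is injective), and invoking Propositions \ref{pencils}, \ref{nets} and \ref{prop:iuppi} does reduce everything to non-complete, base point free subseries $V\subsetneq H^0(\oo_C(H))$. Your reformulation of this last case is also sound and genuinely different in flavor from the paper's: writing $E_{C,\oo_C(H)}\simeq\oo_S(C-H)\oplus E_{H,\omega_H}$, identifying $\Hom(E_{C,\oo_C(H)},\oo_C(C-H))$ with the three-dimensional endomorphism space spanned by the two identities and $\beta$, and observing that injectivity of $\mu_{0,V}$ amounts to the linear independence of $v_1$ and $\beta(v_2)$, where $v=(v_1,v_2)\in H^0(\oo_C(H))^\vee$ is the section cutting out $V$, is a legitimate linear-algebraic translation of the problem.

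The gap is that this translation is where your proof stops: ruling out the degeneration ``$v_1$ and $\beta(v_2)$ linearly dependent'' for \emph{every} admissible $v$ is not a deferrable verification --- it is the entire content of the proposition, and you leave it as ``where the essential work lies''. As you yourself note, no genericity assumption on $V$ is allowed, so generality can only be exploited at the level of the $18$-dimensional linear system $|C|$; one must then build a parameter space of bad pairs (curve, subseries) and show it does not dominate $|C|$. Doing this requires knowing that the ambient bundle $E_{C',\oo_{C'}(H)}$ is the \emph{same} split bundle for all relevant curves $C'$ (rigidity of $E_{H,\omega_H}$, Remark \ref{canonical}) and controlling the dimensions of the resulting Quot-scheme and automorphism fibers --- which is exactly the work the paper does and your proposal omits. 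Note how the paper's proof is structured to minimize what that count must deliver: generality of $C$ is used \emph{only} to exclude the split case $E_V\simeq E_W\oplus\oo_S(C-H)$, via the bound $\dim \mathrm{Quot}_S(E_{H,\omega_H},P)\leq h^0(E_W)=3$ of \cite{lehn} and a Grassmann-bundle dimension count; for simple $E_V$ the equality $\dim\Hom(E_V,\oo_C(C-H))=1$ is then proved unconditionally, for every $V$, by showing that the connecting map $\delta:\Hom(E_V,\oo_C(C-H))\to\Ext^1(E_V,V^\vee\otimes\oo_S)$ vanishes (a nonzero $\delta(f')$ would force $\oo_S$ to split off $E_V$, contradicting $H^2(E_V)=0$). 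Your proposal contains no substitute for either of these two steps, so as it stands it establishes nothing beyond the cases already settled by the inclusion $\ker\mu_{0,V}\subseteq\ker\mu_{0,A}$.
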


\begin{proof}
The only complete linear series on $C$ with non-injective Petri-map is $\oo_C(H)$. Therefore, it is enough to prove the statement  for non-complete linear series of the form $(\oo_C(H),V)$ with $\dim V=3$. Let $E_V:=E_{C,(\oo_C(H),V)}$ be the associated Lazarsfeld-Mukai bundle. By \S \ref{tre},  $E_V$ satisfies $h^1(E_V)=1$ and sits in the following universal extension, cf. \eqref{univ}:
\begin{equation}\label{uni}
0\to \oo_S\to E_{C, \mathcal{O}_C(H)}\to E_V\to 0.
\end{equation}
By Proposition \ref{pare}, we need to show that $\Hom(E_V, \oo_C(C-H))=0$. As an intermediate step, we will first prove that  $E_V$ is simple.                                                  
Short exact sequences \eqref{london} and \eqref{uni} fit in the following commutative diagram:
$$
 \xymatrix{
&&0&0\\
0\ar[r]&\mathcal{O}_S(C-H)\ar[r]&E_V\ar[u]\ar[r]&Q\ar[u]\ar[r]&0\\
0\ar[r]&\mathcal{O}_S(C-H)\ar@{=}[u]\ar[r]&E_{C,\mathcal{O}_C(H)}\ar[u]\ar[r]&E_{H,\omega_H}\ar[u]\ar[r]&0.\\
&&\oo_S\ar[u]\ar@{=}[r]&\oo_S\ar[u]\\
&&0\ar[u]&0\ar[u]\\
}
$$
From the right hand side of the diagram, one deduces that $Q$ coincides with the Lazarsfeld-Mukai bundle $E_W:=E_{H,(\omega_H,W)}$ associated with some non complete linear series $(\omega_H,W)$ on some hyperplane section $H$ of $S$. In particular, the bundle $E_W$ is globally generated and $\mu_{C-H}(E_W)=6$. If $E_W$ were not $\mu_{C-H}$-stable, by Lemma \ref{ch} it would lie in a short exact sequence
$$
0\to\mathcal{O}_S(2H-C)\to E_W\to \oo_S(C-H)\otimes I_\xi\to0,
$$
for some $0$-dimensional subscheme $\xi\subset S$, and thus the contradiction
$$
4=c_2(E_W)=(2H-C)\cdot (C-H)+l(\xi)\geq 6.
$$
Hence, $E_W$ is $\mu_{C-H}$-stable and $\Hom(\oo_S(C-H),E_W)=\Hom(E_W,\oo_S(C-H))=0$. It follows that $E_V$ is simple unless \mbox{$E_V\simeq E_W\oplus \oo_S(C-H)$}. We will now show that, if $C$ is general in its linear system, the Lazarsfeld-Mukai bundle $E_V$ associated with any non-complete linear series $(\oo_C(H),V)$ does not split in this way. 

Remember that $E_{H,\omega_H}$ is rigid by Remark \ref{canonical}. We consider the Quot-scheme $\mathcal Q:=\mathrm{Quot}_S(E_{H,\omega_H},P)$, where $\mathrm{P}$ is the Hilbert polynomial of $E_W$. It is well known (cf. \cite{lehn} Proposition 2.2.8) that, for any $[E_W]\in \mathcal Q$, the following holds:
\begin{equation}
\dim_{[E_W]}\mathcal Q\leq\dim\Hom(\oo_S,E_W)=h^0(E_W)=3;
\end{equation}
 and hence the dimension of any component of the Quot-scheme is $\leq 3$.
 
 Let $\G_\mathcal Q\to\mathcal Q$ be the Grassmann bundle whose fiber over a general $[E_W]\in \mathcal Q$ is the $15$-dimensional Grassmannian $G(3, H^0(E_W\oplus \mathcal{O}_S(C-H)))$. We define $h_{\mathcal Q}:\G_\mathcal Q\dashrightarrow |C|$ mapping a general point $(E_W,\Lambda)\in\G_\mathcal Q$ to the degeneracy locus of the evaluation map \mbox{$ev_\Lambda:\Lambda\otimes\oo_S\to E_W\oplus \mathcal{O}_S(C-H)$}, which is a smooth curve $C_\Lambda \in |C|$. The fibers of $h_{\mathcal Q}$ are at least $1$-dimensional because the composition of $ev_\Lambda$ with any autormophism of $ E_W\oplus \mathcal{O}_S(C-H)$ has the same degeneracy locus $C_\Lambda$. Therefore, the image of $h_\mathcal Q$ has dimension $\leq \dim \G_\mathcal Q-1=\dim \mathcal Q+15-1 \leq 17$ and $h_\mathcal Q$ is not dominant. This shows that, if $C$ is general, the Lazarsfeld-Mukai bundle $E_V$ associated with any non-complete linear series $(\oo_C(H),V)$ of type $g^2_{16}$ on $C$ is simple. 
 
 In order to conclude the proof, we now show that the simplicity of $E_V$ implies that $\dim\Hom(E_V, \oo_C(C-H))=1$ and thus the injectivity of the Petri map $\mu_{0,(\oo_C(H),V)}$. Consider the short exact sequence defining $E_V$:
 \begin{equation}\label{f}
 0\to V^\vee\otimes \oo_S\to E_V\stackrel{f}{\to} \oo_C(C-H)\to 0.
 \end{equation}
Let $f':E_V\to \oo_C(C-H)$ be a morphism different from $f$; we may assume that $f'$ is surjective since this is true for $f$ and thus for a general morphism from $E_V$ to $\oo_C(C-H)$.  We want to show that $f'$ is obtained by composing $f$ with an automorphism of $E_V$, and is thus a scalar multiple of $f$ since $E_V$ is simple. Equivalently, if we consider the long exact sequence 
$$
0\to H^0(E_V\otimes E_V^\vee)\to \Hom(E_V, \oo_C(C-H))\stackrel{\delta}{\to} \Ext^1(E_V,V^\vee\otimes\oo_S)
$$
obtained applying $\Hom (E_V,-)$ to \eqref{f}, we need to to prove that $\delta(f')=0$. By contradiction, assume $\delta(f')\in \Ext^1(E_V,V^\vee\otimes\oo_S)$ is the class of a nontrivial extension
\begin{equation}\label{extension}
0\to V^\vee\otimes\mathcal{O}_S\to E_1\to E_V\to 0
\end{equation}
that fits (by construction) in the following commutative diagram:
\begin{equation}\label{diagram}
 \xymatrix{
&&0&0\\
0\ar[r]&V^\vee\otimes\mathcal{O}_S\ar[r]&E_V\ar[u]\ar^f[r]&\oo_C(C-H)\ar[u]\ar[r]&0\\
0\ar[r]&V^\vee\otimes\mathcal{O}_S\ar@{=}[u]\ar[r]&E_1\ar[u]\ar^h[r]&E_V\ar^{f'}[u]\ar[r]&0.\\
&&K\ar^g[u]\ar@{=}[r]&K\ar[u]\\
&&0\ar[u]&0\ar[u]\\
}
\end{equation}
Since $\Ext^1(E_V,V^\vee\otimes\oo_S)\simeq H^1(E_V)^\vee\otimes V^\vee$, the element $\delta(f')$ also corresponds to a non-zero morphism from $H^1(E_V)$ to $V^\vee$. This implies that the extension \eqref{extension} fits in the following commutative diagram:
\begin{equation}\label{diag}
 \xymatrix{
&0&0\\
&\oo_S\ar[u]\ar@{=}[r]&\oo_S\ar[u]\\
0\ar[r]&V^\vee\otimes\mathcal{O}_S\ar[u]\ar[r]&E_1\ar[u]\ar^h[r]&E_V\ar[r]&0\\
0\ar[r]&H^1(E_V)\otimes\mathcal{O}_S\ar[u]\ar[r]&E_{C,\oo_C(H)}\ar[u]\ar[r]&E_V\ar@{=}[u]\ar[r]&0,\\
&0\ar[u]&0\ar[u]&0\ar[u]\\
}
\end{equation}
where the lowest row is \eqref{uni}. 

Since $H^1(E_{C,\oo_C(H)})=0$, the second column in the diagram splits, that is, $E_1\simeq \oo_S\oplus E_{C,\oo_C(H)}$. We will obtain a contradiction looking at the maps in diagram \eqref{diagram}. Since $h\circ g$ is injective and the $\oo_S$-factor of $E_1$ is contained  in the kernel of $h$, the image of $g:K\to E_1\simeq \oo_S\oplus E_{C,\oo_C(H)}$ is contained in $E_{C,\oo_C(H)}$ and its cokernel $E_V$ thus splits as $\oo_S\oplus \mathrm{Coker} (q\circ g)$, where $q: E_1\to E_{C,\oo_C(H)}$ is the obvious projection. This is a contradiction as $H^2(E_V)=0$. Therefore, $\delta(f')=0$ as required.

\end{proof}

\end{document}